\documentclass[11pt,a4paper,reqno]{amsart}
\usepackage{amssymb,amsmath,amsfonts,hyperref,amsthm,graphicx,color,float,
mathtools,bbm}


\usepackage{tikz}
\usepackage{todonotes}
\usepackage[normalem]{ulem}
\usepackage{fourier}
\usepackage{pgfplots}
\usepackage{subcaption}
\usepackage{stackrel}
\usepackage{nicefrac}

\numberwithin{figure}{section}
\numberwithin{equation}{section}

\newtheorem{theorem}{Theorem}
\newtheorem{definition}[theorem]{Definition}
\newtheorem{lemma}[theorem]{Lemma}
\newtheorem{proposition}[theorem]{Proposition}

\newtheorem{corollary}[theorem]{Corollary}

\newtheorem{remark}{Remark}
\newtheorem*{example}{Example}

\numberwithin{theorem}{section}

\newcommand{\N}     {\mathbb{N}}
\renewcommand{\P}   {\mathbb{P}}

\newcommand{\Z}     {\mathbb{Z}}
\newcommand{\Zd}    {{\mathbb{Z}^d}}

\newcommand{\sss}	{\scriptscriptstyle}

\usepgfplotslibrary{fillbetween}

\newcommand{\Cinf}{\mathcal{C}_\infty}
\makeatletter
\@namedef{subjclassname@2020}{\textup{2020} Mathematics Subject Classification}
\makeatother

\hyphenation{For-schungs-ge-mein-schaft}


\title[Graph distances in scale-free percolation: the logarithmic case]{Graph distances in scale-free percolation: \\ the logarithmic case}

\author{Nannan Hao}
\author{Markus Heydenreich}
\address{Mathematisches Institut, Ludwig Maximilians Universit\"at M\"unchen, Theresienstr.~39, 80333~Munich, Germany}
\email{hao@math.lmu.de, m.heydenreich@lmu.de}
\date{\today}
\keywords{Scale-free percolation, graph distance, small-world phenomena, long-range percolation.}
\subjclass[2020]{60K35, 05C80}

\begin{document}

\begin{abstract}
Scale-free percolation is a stochastic model for complex networks. In this spatial random graph model, vertices $x,y\in\mathbb{Z}^d$ are linked by an edge with probability depending on i.i.d.\ vertex weights and the Euclidean distance $|x-y|$. Depending on the various parameters involved, we get a rich phase diagram. We study graph distances and compare it to the Euclidean distance of the vertices. Our main attention is on a regime where graph distances are (poly-)logarithmic in the Euclidean distance. We obtain improved bounds on the logarithmic exponents. In the light tail regime, the correct exponent is identified.
\end{abstract}

\maketitle

\section{Introduction}\label{intro}
\subsection{The model}~
We study \textit{scale-free percolation}, which we henceforth abbreviate as SFP. 
This is a stochastic model for real networks such as social networks, biological networks, the internet, etc., which was introduced by Deijfen, van der Hofstad and Hooghiemstra in 2013 \cite{Hof13}. 
Many real networks share ubiquitous features such as scale-free degrees and small-world behaviour. Our model, SFP, is an infinite spatial random graph model that exhibits these features; it is embedded into the hypbercubic lattice $\mathbb{Z}^d$ and shows geometric clustering. Closely related models are based on point processes rather than the fixed grid structure of $\mathbb Z^d$, and such models have been studied on finite and infinite domains. We discuss these variants in Section \ref{sec-lit}. 

We now describe the model in detail. We consider the lattice $\mathbb{Z}^d$ with fixed dimension $d\ge1$ and construct a locally finite random subgraph of the complete graph on the vertex set $\mathbb{Z}^d$. Recall that a graph is called locally finite if all its vertices have finite degrees. To each vertex $x\in \mathbb{Z}^d$, we assign an i.i.d.\ weight $W_x$ which follows a power-law distribution with parameter $\tau-1$ ($\tau>1$), that is, 
\begin{align}\label{weight}
\mathbb{P}(W_x\geq w)= w^{-(\tau-1)},\quad w\geq 1.
\end{align}
Conditioning upon these weights, we declare an edge $\{x,y\}$ to be \emph{open} independently of the status of other edges with probability
\begin{align}\label{prob}
p_{x,y}=1-\exp\left(-\lambda\frac{W_xW_y}{|x-y|^{\alpha}}\right),
\end{align} 
where $|\,\cdot\,|$ denotes the Euclidean norm and $\alpha, \lambda>0$ are further parameters of the model. We write $x\sim y$ if the edge $\{x,y\}$ is open. 
The object of interest in the present study is the subgraph induced by the open edges, we call its connected components \emph{clusters}.

Scale-free percolation indeed generates scale-free networks in the sense that the degrees of vertices follow a power-law distribution with tail exponent
\begin{equation}\label{gamma}
\gamma=\alpha(\tau-1)/d.    
\end{equation}
That is,
\begin{equation}
\mathbb{P}(D_x\geq k)=k^{-\gamma}\ell(k),\qquad k\in\N,
\end{equation}
where $D_x$ is the degree of $x\in \mathbb{Z}^d$ and $\ell$ is slowly varying at infinity, cf.\ \cite{Hof13}.

The focus of the present paper is on graph distances. Recall that the graph distance between two vertices is defined as the length of a shortest open path connecting them. If the vertices are in different clusters (and hence such open paths do not exist), then the graph distance is $\infty$. 
Graph distances in real networks, in particularly social networks, have been in the focus of network research since Milgram's experimental discovery of the small world effect (casually phrased as ``six degrees of separation''), and have also been investigated theoretically since then, e.g. \cite{Kle99,Mil67}. 

On finite networks, say with $N$ vertices, ``small world'' means that the graph distance between two points is much shorter than a regular structure would suggest, e.g.\ $(\log N)^{O(1)}$ as $N\to\infty$. Our network is infinite, and we therefore give a different interpretation to the small-world effect. We call an infinite subgraph $\mathcal C\subset\Z^d$ a \emph{small-world} graph if the graph distance $D(x,y)$ on $\mathcal C$ is much smaller than the Euclidean distance, that is if
\begin{equation} \label{eqSmallWorld}
	D(x,y)=\big(\log|x-y|\big)^{O(1)}\qquad \text{as $|x-y|\to\infty$}.
\end{equation}

For graph distances in scale-free percolation, a rich phase diagram has been established in the literature: conditional on two points, $x$ and $y$ say, to be in the (unique) infinite component, we get that with high probability (as $|x-y|\to\infty$)
\begin{itemize}
\item if $\gamma\le1$ then $D(x,y)\le 2$, cf.\ \cite{Hey17};
\item if $\alpha<d$, then $D(x,y)\leq \lceil d/(d-\alpha) \rceil$, cf.\ \cite{Hey17};
\item if $\gamma \in (1,2)$ and $\alpha > d$, then $D(x,y)=\frac{2}{|\log (\gamma-1)|}\log\log|x-y|$, cf.\ \cite{Hof13,Hof17};
\item if $\gamma>2$ and $\alpha>2d$, then $D(x,y)\gtrsim |x-y|$, cf.\ \cite{Dep15,New86}.
\end{itemize}

This behaviour (together with our new results) is summarized in Figure \ref{overview}. 
The results in the first three cases are referred as ``ultra-small world'' phenomenon, because the asymptotics are of smaller order than the requirements of \eqref{eqSmallWorld}. 
In these regimes, shortest paths are typically formed by vertices that have the highest weight in a certain neighborhood (\emph{locally dominanting vertices} or \emph{hubs}). 
In contrast, for $d<\alpha < 2d$ and $\gamma >2$, the weights are more homogeneous, and it is not sufficient to consider only dominant vertices to find the shortest paths. In this regime, there is a fine interplay between weights and spatial position of various vertices, which leads to (poly-)logarithmic upper and lower bounds on graph distances. The aim of this paper is to identify the right logarithmic power, thereby completing the phase diagram.

\begin{figure}[!h]
\centering
\begin{tikzpicture}
\begin{axis}[axis lines=middle, xlabel = $\tau$, ylabel = $\alpha$,xmin=1, xmax=4.2,
    ymin=0, ymax=2.9,
    xtick={2,3,4}, ytick=\empty,x post scale=1.6]
\addplot [
    domain=1.1:4.2, 
    samples=100, 
    color=blue,
    ]
    {1/(x-1)};
\addplot [name path=A,
    domain=1.7:3, 
    samples=100, 
    color=red,
    ]
    {2/(x-1)};
\addplot [name path=C,
    domain=2.5:3, 
    samples=100, 
    color=red,
    ]
    {1/(x-2)};
\addplot [name path=D,
    domain=2:4.2, 
    samples=100, 
    color=black,
    ]
    {1};
\addplot [name path=B,
    domain=2:4.2, 
    samples=100, 
    color=black,
    ]
    {2};
\addplot[red] coordinates {(3, 1) (3, 2)};
\addplot[blue!10] fill between[of=C and B, soft clip={domain=2.5:3}];
\addplot[yellow!10] fill between[of=A and C, soft clip={domain=2.5:3}];
\addplot[yellow!10] fill between[of=A and B, soft clip={domain=2:2.5}];
\addplot[green!10] fill between[of=D and B, soft clip={domain=3:4.2}];

\end{axis}
\node[] at (10.5,2.2) 
    {\small $\alpha=d$};
    \node[] at (10.5,4.2) 
    {\small $\alpha=2d$};
    \node[] at (1.8,5.5) 
    {\small $\gamma=1$};
    \node[] at (3.1,5.5) 
    {\small $\gamma=2$};
    \node[] at (1.7,1.7) 
    {\small $D(x,y)\leq 2$};
    \node[] at (8.5,1.6) 
    {\small $D(x,y)\leq \lceil d/(d-\alpha) \rceil$};
    \node[] at (3.4,2.9) 
    {\small $D(x,y)\approx$};
    \node[] at (4.2,2.4) 
    {\small $ \log \log |x-y|$};
    \node[] at (6,5.2) 
    {\small $D(x,y) \gtrsim |x-y|$};
    \node[] at (8.9,2.7) 
    {\small $D(x,y) \approx(\log|x-y|)^{\Delta}$};
    \node[] at (5.4,4.15) 
    {\small $ \alpha(\tau-2)=d$};
        \node[] at (4.8,3.3) 
    { $(a)$};
        \node[] at (6.3,3.3) 
    { $(b)$};
        \node[] at (8.4,3.3) 
    { $(c)$};
\end{tikzpicture}
\caption{Graph distances in different regimes of scale-free percolation. The regions in shadow are those we are interested in. The areas (a), (b) and (c) represent our improved bounds established in Theorem \ref{thm21}.}\label{overview}
\end{figure}

At the phase boundaries ($\gamma=1$ and $\gamma=2$) we expect that the graph distances depend on the precise tail behaviour of the connectivity function in \eqref{prob}, so that any universality is lost.

\subsection{Main results}
Before stating the main results, we first introduce some parameters:
\begin{align}
	&\alpha_1:=\alpha \wedge \frac{\alpha(\tau-1)}{2}=\alpha\wedge\frac{\gamma d}2, 
	\quad \alpha_2 :=\alpha \wedge \left(\alpha(\tau-1)-d\right)\label{alpha}=\alpha\wedge (\gamma-1)d,\\
	&\Delta:=\frac{\log 2}{\log (2d/{\alpha})},\quad\Delta_1:=\frac{\log 2}{\log (2d/{\alpha_1})},\quad  	\Delta_2:=\frac{\log 2}{\log (2d/{\alpha_2})}\label{delta}.
\end{align}
Here $x\wedge y$ means the minimum of $x$ and $y$.
If $\gamma$ in \eqref{gamma} is larger than 2, then 
\begin{equation}
d<\alpha_1\leq\alpha_2\leq\alpha<2d.
\end{equation}
As a consequence
\begin{equation}
1<\Delta_1\leq\Delta_2\leq\Delta.
\end{equation}

Deijfen et al. showed in \cite{Hof13} that for $d<\alpha<2d$ and $\gamma>1$, if $\P(W=0)<1$, then there exists a critical value $\lambda_c\in(0,\infty)$ such that for $\lambda>\lambda_c$ there exists a unique infinite cluster. We thus may condition on two vertices $x$ and $y$ to be in the same infinite cluster.

\begin{theorem}\label{thm21}
For scale-free percolation with parameters $\lambda>\lambda_c,\gamma>2$, and $d<\alpha<2d$, we have that for any $\epsilon>0$,
\begin{equation}
\lim_{|x-y|\to \infty}\mathbb{P}\left(\left(\log |x-y|\right)^{\Delta_1-\epsilon}\leq D(x,y)\leq \left(\log |x-y|\right)^{\Delta_2+\epsilon}\Big| x,y\in\Cinf\right)=1.
\end{equation}
\end{theorem}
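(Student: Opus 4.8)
The plan is to prove the two inequalities separately and then remove the conditioning on $\{x,y\in\Cinf\}$. For the conditioning I would first establish both bounds for the \emph{unconditional} probability, with $\{x,y\in\Cinf\}$ replaced by suitable high‑probability ``good'' events (enough high‑weight vertices in the relevant boxes, local connectivity of hubs, etc.), and then transfer to the conditional statement using that $\P(x,y\in\Cinf)$ is bounded away from $0$ for $\lambda>\lambda_c$ (by translation invariance and a Harris/FKG argument), together with the fact that those good events have probability tending to one. Write $n:=|x-y|$.

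\emph{Lower bound $D(x,y)\ge(\log n)^{\Delta_1-\epsilon}$.} I would use a recursive first‑moment bound over open self‑avoiding paths, in the spirit of Biskup's lower bound for long‑range percolation. The event $\{D(x,y)\le k\}$ forces an open self‑avoiding path $x=v_0,\dots,v_m=y$ with $m\le k$; conditionally on the weights the model is independent bond percolation, so such a path is present with probability at most $\prod_i\bigl(1\wedge\lambda W_{v_{i-1}}W_{v_i}|v_{i-1}-v_i|^{-\alpha}\bigr)$. Since $\E[W^2]=\infty$ when $\tau\le 3$, I would first truncate the weights at a polynomial level (equivalently, split the path count according to the weight scales of the visited vertices) and then organise the count recursively via the \emph{longest} edge of the path — which decomposes it into two sub‑paths joining $x$ and $y$ to the endpoints of that edge — to obtain a recursion relating the expected number of admissible paths at Euclidean scale $n$ to the same quantity at scale $n^{\sigma}$. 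The exponent $\sigma$ is dictated by the probability of a single long open edge of length $L$ between two prescribed boxes of side $b$, which the weights make cheapest either via two typical‑weight vertices (cost $\asymp b^{2d}L^{-\alpha}$) or via one moderately high‑weight vertex permitted by the truncation; optimising this trade‑off produces the effective exponent $\alpha_1=\alpha\wedge\tfrac{\gamma d}{2}$. Unwinding the recursion, the expected number of such paths is $o(1)$ whenever $k\le(\log n)^{\Delta_1-\epsilon}$.

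\emph{Upper bound $D(x,y)\le(\log n)^{\Delta_2+\epsilon}$.} Here I would build a short open path explicitly by a hierarchical construction over $\Theta(\log\log n)$ scales. Starting from $x$ and $y$, descend through a nested family of boxes whose side lengths shrink from $n$ to $O(1)$ under $t\mapsto t^{\sigma}$ with $\sigma=\alpha_2/(2d)+\epsilon'$; at each level connect the current reference point to a high‑weight \emph{hub} in the surrounding box, and use hubs at consecutive levels, together with $O(1)$ bridging edges, to cross from one scale to the next. The key input is a \emph{good‑box} event at each scale — every box of the relevant side length contains enough vertices above the prescribed weight threshold, these hubs are locally well connected, and the required bridging edges are present — which I would enforce simultaneously for all boxes at that scale by a union bound; as there are only polynomially many boxes and $\Theta(\log\log n)$ scales, this costs only the arbitrarily small $\epsilon$. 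The weight threshold defining the hubs must be optimised against the box side: hubs that are too rare cannot be reliably found in every box, hubs that are too weak cannot bridge to the next scale, and the optimal choice produces $\alpha_2=\alpha\wedge(\gamma-1)d$ (note $(\gamma-1)d=\alpha(\tau-2)$, the curve in Figure~\ref{overview}). Since the construction branches in two at each of $\Theta(\log\log n)$ levels, the resulting path has length $2^{\Theta(\log\log n)}=(\log n)^{\Delta_2+\epsilon}$, with $\Delta_2=\log 2/\log(2d/\alpha_2)$; finally I would check that $x$ and $y$ themselves connect to their level‑$0$ hubs with high probability.

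\emph{Main obstacle.} In both directions the hard part will be handling the unbounded weights: because $\E[W^2]=\infty$ for $\tau\le3$, neither the first‑moment path count (lower bound) nor the existence of bridging edges (upper bound) is covered by the standard long‑range estimates, and one has to truncate the weights at exactly the right polynomial scale and track the contribution of high‑weight vertices — it is precisely this bookkeeping that turns the long‑range exponent $\alpha$ into $\alpha_1$ and $\alpha_2$, and getting the two to match the stated $\Delta_1,\Delta_2$ is the delicate point. A secondary difficulty for the upper bound is making the hierarchical construction uniform over all boxes at each scale while keeping the exponent loss arbitrarily small; and in both directions one must arrange the scale‑dependent events to be sufficiently decoupled — e.g.\ by separating edges according to their length — for the recursion, respectively the union bound, to close.
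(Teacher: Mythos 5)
Your plan identifies the right exponents and the right overall architecture, but it departs from the paper's route in both halves, and one step of your upper bound would not close as stated. On the lower bound you and the paper both run Biskup's longest-edge/hierarchy recursion \cite{Bis04}, but you propose to tame the shared vertex weights by polynomial truncation and weight-scale bookkeeping, whereas the paper imports the Cauchy--Schwarz estimates of \cite{Hof13} (Lemmas \ref{singleedge} and \ref{path}): since each vertex of a self-avoiding path contributes its weight to at most two edges, one bounds the path probability by $\prod_i\mathbb{E}[p_i^2]^{1/2}$ and computes $\mathbb{E}[(\lambda W_xW_y|x-y|^{-\alpha}\wedge1)^2]^{1/2}\lesssim|x-y|^{-\alpha_1+\delta}$, which is exactly where $\alpha_1=\alpha\wedge\gamma d/2$ enters, with no truncation. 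The price is that Biskup's hierarchies must be restricted (Condition 5 of Definition \ref{hchy}) so that every hierarchy decomposes into vertex-disjoint self-avoiding paths to which this bound applies; if you run the longest-edge recursion on unrestricted hierarchies, sites can coincide and your per-edge bound does not factor. Your truncation route can be carried out, but you inherit precisely this bookkeeping. On the upper bound the paper is much shorter than your multiscale construction: it proves a two-hop ``bridge'' estimate $\P(x\sim A\sim y)\ge KN^{-(2\alpha_1-d\beta)}$ for $A$ a box of side $N^{\beta}$ halfway between $x$ and $y$ (Propositions \ref{adjedge}--\ref{pro315}), lets $\beta\to1$ so that the effective exponent becomes $2\alpha_1-d=\alpha_2$ in the relevant regime, uses positive correlation (Proposition \ref{pro31}) to dominate an independent long-range percolation with that exponent, and then invokes Theorem \ref{lrp} as a black box; the upper bounds in cases (b) and (c) are simply quoted from \cite{Dep15}.

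The concrete gap is in your union-bound step. At the largest scales the event that two prescribed hubs in boxes of side $L^{\sigma}$ at distance $L$ are joined by $O(1)$ bridging edges has probability only polynomially small in $L$ (of order $L^{-\alpha_2+o(1)}$ once you sum over the candidate intermediate vertices), not $1-o(1)$, so ``enforce simultaneously for all boxes at that scale by a union bound'' can only cover the hub-existence and local-density events, never the cross-scale connection events themselves. To make each cross-scale connection succeed with super-polynomially small failure probability you need the full supply of candidate two-hop bridges between two boxes (roughly $L^{2d\sigma-\alpha_2+o(1)}$ of them for $\sigma>\alpha_2/(2d)$) together with a second-moment or independence-after-conditioning argument --- which is exactly Biskup's dense-box machinery rerun with compound edges, or, as the paper prefers, packaged once and for all by treating the two-hop connection as a single long-range edge with exponent $2\alpha_1-d\beta$ and citing Theorem \ref{lrp}. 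Without one of these two devices your hierarchical construction only recovers the exponent $\Delta$, not $\Delta_2$.
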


Depending on the value of $\gamma$ and $\alpha$, the various minima in \eqref{alpha} give rise to three different regimes. These are depicted in Figure \ref{overview}. Writing $\mathcal{C}_{\infty}$ for the unique infinite cluster in the graph, we get  

\begin{itemize}
\item[(a)] for $\gamma>2$, $\alpha(\tau-2)<d$ and arbitrary $\epsilon>0$,
\begin{equation}\nonumber
	\lim_{|x-y|\to \infty}\mathbb{P}\left(\left(\log |x-y|\right)^{\Delta_1-\epsilon}\leq D(x,y)\leq \left(\log |x-y|\right)^{\Delta_2+\epsilon}\Big| x,y\in\Cinf\right)=1;
\end{equation}
\item[(b)] for $\tau<3$, $\alpha(\tau-2)\geq d$ and arbitrary $\epsilon>0$, 
\begin{equation}\nonumber
	\lim_{|x-y|\to \infty}\mathbb{P}\left(\left(\log |x-y|\right)^{\Delta_1-\epsilon}\leq D(x,y)\leq \left(\log |x-y|\right)^{\Delta+\varepsilon}\Big| x,y\in\Cinf\right)=1;
\end{equation}
\item[(c)] for $\tau\geq 3$ and arbitrary $\epsilon>0$, 
\begin{equation}\nonumber
	\lim_{|x-y|\to \infty}\mathbb{P}\left(\left(\log |x-y|\right)^{\Delta-\epsilon}\leq D(x,y)\leq \left(\log |x-y|\right)^{\Delta+\varepsilon}\Big| x,y\in\Cinf\right)=1.
\end{equation}
\end{itemize}
Note that here the upper bounds in Part (b) and (c) are from \cite{Dep15}.

Despite the improvements in both the upper and lower bounds, the reader may observe that there is still a gap between them in case (a) and (b) in our result. Therefore, it remains open as to what the correct exponent is. The main difficulty in closing the gap between the upper and lower bounds is that we do not have a precise estimate for the probability of a path being open in scale-free percolation. Lemma \ref{path} gives a nice upper bound. However, in view of Proposition \ref{adjedge}, it appears that this bound is not optimal for $\tau<3$. As shown in Proposition \ref{adjedge}, the actual asymptotics of the probability of a path being open in SFP are heterogeneous in the exponents of edges, which poses a great difficulty.

In fact, our methods also apply to more general forms of connection probabilities than \eqref{prob}. We return to this observation in Remark \ref{remark}. If we make the extra assumption that \emph{additionally all nearest-neighbour edges are open}, then a comparison with long-range percolation (explained in the following paragraph) gives the following improvement to parts (b) and (c) above: there exists $C>0$ such that 
\begin{equation}\label{upperbound2}
	\lim_{|x-y|\to \infty}\mathbb{P}\left(D(x,y)\leq C\left(\log |x-y|\right)^{\Delta}\right)=1.
\end{equation}
Mind that the extra assumption ensures that $x,y\in\Cinf$.


\subsection{Comparison with long-range percolation}\label{lrpi}

Before we proceed to the proofs of the results, we first introduce a related (though easier) model named \emph{long-range percolation}. Our analysis of scale-free percolation is crucially based on techniques developed for long-range percolation.

Long-range percolation (henceforth LRP) is also defined on the lattice $\mathbb{Z}^d$ for fixed dimension $d\geq 1$. Independently of all the other edges, the edge $\{x,y\}$ is open with probability $p^{\sss \rm LRP}_{xy}$. A typical choice of $p^{\sss \rm LRP}_{xy}$ is
\begin{equation}
p^{\sss \rm LRP}_{xy}=1-\exp\left(-\frac{\lambda}{|x-y|^{\alpha}}\right).
\end{equation}
Note that $p^{\sss \rm LRP}_{xy}$ is equal to $p_{xy}$ for scale-free percolation (as defined in \eqref{prob}) if $W_x\equiv1$ or $\tau=\infty$. 

Biskup et al.\ studied the graph distances in long-range percolation and obtained sharp results.
\begin{theorem}[{Biskup \cite{Bis04}, Trapman \cite{Trp10}, Biskup-Lin \cite{Bis17}}]\label{lrp}
Consider the long-range percolation with connection probability $\{p_{xy}\}$ such that
\begin{equation}\label{asymlrp}
\liminf_{|x-y|\to \infty}p^{\sss \rm LRP}_{xy}|x-y|^{\alpha} >0,
\end{equation}
for some $\alpha>0$.
If $d <\alpha <2d$ and a unique infinite open cluster exists, then for all $\epsilon>0$ one has
\begin{equation} \label{bis04}
\lim_{|x-y|\to \infty}\mathbb{P}\left(\left( \log |x-y|\right)^{\Delta-\epsilon}\leq D(x,y) \leq \left(\log |x-y|\right)^{\Delta+\epsilon}\Big|x,y \in \Cinf\right)=1.
\end{equation}
If, moreover, we have the stronger form of connection probability
\begin{align}
p^{\sss \rm LRP}_{xy}=1-\exp\left(-\frac{\lambda}{|x-y|^{\alpha}}\right), 
\end{align}
and assume the existence of all nearest-neighbour edges, then there exist constants $C>c>0$ such that
\begin{equation} \label{upper}
\lim_{|x-y|\to \infty}\mathbb{P}\left(c\left( \log |x-y|\right)^{\Delta}\leq D(x,y) \leq C\left(\log |x-y|\right)^{\Delta}\right)=1.
\end{equation}
\end{theorem}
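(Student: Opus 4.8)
The plan is to prove each of the two displays of Theorem~\ref{lrp} by separate upper and lower bounds on $D(x,y)$; only \eqref{upper} uses the exact power-law form of $p^{\sss\rm LRP}_{xy}$ together with the presence of all nearest-neighbour edges. \emph{Upper bound $(\log|x-y|)^{\Delta+\epsilon}$:} put $\beta:=\alpha/(2d)\in(\tfrac12,1)$ and argue by a hierarchical multi-scale construction. The basic step is that for boxes $Q_x,Q_y$ of side length $m^{\beta}$ centred at $x,y$ with $|x-y|\asymp m$, the expected number of open edges between $Q_x$ and $Q_y$ is of order $m^{2d\beta}m^{-\alpha}=1$ by \eqref{asymlrp}, so a second-moment estimate shows that with probability bounded away from $0$, uniformly in $m$, there is an open edge $\{u,v\}$ with $u\in Q_x$, $v\in Q_y$. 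Recursing on the pairs $(x,u)$ and $(v,y)$, now at Euclidean distance $O(m^{\beta})$, produces a path of length at most $2\ell(Cm^{\beta})+1$, where $\ell(m)$ is a high-probability bound for $D$ over distance $m$. After $k$ iterations the scale is $m^{\beta^{k}}$, which is $O(1)$ once $k\asymp\log\log m/\log(1/\beta)=\log\log m/\log(2d/\alpha)$, and the accumulated length is $O(2^{k})=O\big((\log m)^{\log 2/\log(2d/\alpha)}\big)=O\big((\log m)^{\Delta}\big)$. The loss from $\Delta$ to $\Delta+\epsilon$ comes from forcing the recursion to succeed simultaneously at all $(\log m)^{O(1)}$ scales and in all polynomially many boxes visited; iterating a ``most boxes are good'' renormalisation and taking a union bound costs only a subpolynomial factor, absorbed into $\epsilon$. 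The conditioning on $x,y\in\Cinf$ is handled by restricting to the event that $x$ and $y$ lie in a sufficiently dense renormalised good region, an event of probability tending to $1$ given $x,y\in\Cinf$.

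\emph{Sharp upper bound \eqref{upper}.} With $p^{\sss\rm LRP}_{xy}=1-\exp(-\lambda|x-y|^{-\alpha})$ and all nearest-neighbour edges open, I would replace the probabilistic ``connecting edge between two boxes'' step by a rigid scheme as in Biskup--Lin: fix in advance a deterministic nested family of scales $R_0>R_1>\dots$ with $R_{j+1}\approx R_j^{\beta}$ and show that one can route from $x$ to $y$ along a bounded-degree tree over these scales, using long open edges at the coarse levels --- whose existence now fails only with summably small probability, since the nearest-neighbour edges let one pick up such an edge anywhere in a whole box rather than at a fixed vertex --- and using the nearest-neighbour edges themselves to bridge the residual $O(1)$ gap at the finest level. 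Because no probabilistic ``repair'' step is needed, the union-bound loss disappears and the length is $\le C2^{k}=C(\log|x-y|)^{\Delta}$ with $C=C(\alpha,d,\lambda)$; the matching lower constant $c$ is supplied by the lower bound.

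\emph{Lower bound $(\log|x-y|)^{\Delta-\epsilon}$.} Set $L:=(\log|x-y|)^{\Delta-\epsilon}$ and bound from above the expected number of open self-avoiding paths of length $L$ from $x$ to $y$. For such a path $(v_0,\dots,v_L)$ one has $\P(\text{open})\le\prod_{i=1}^{L}\big(\lambda|v_i-v_{i-1}|^{-\alpha}\wedge 1\big)$, which only uses the one-sided estimate $p^{\sss\rm LRP}_{xy}\le C|x-y|^{-\alpha}$ (immediate for the exact form, and available in the regime of \eqref{bis04} where the connection function is comparable to $|x-y|^{-\alpha}$). Bucket the edges of the path by dyadic length, writing $m_j$ for the number with length in $[2^{j},2^{j+1})$; each such edge has at most $C2^{jd}$ possible far endpoints, so the expected number of open length-$L$ paths is at most
\[
\sum \frac{L!}{\prod_j m_j!}\prod_j\big(C\lambda\,2^{j(d-\alpha)}\big)^{m_j},
\]
the sum running over profiles $(m_j)$ with $\sum_j m_j=L$ and, by the triangle inequality, $\sum_j m_j 2^{j+1}\ge|x-y|$. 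Since $\alpha>d$, long edges are exponentially penalised, and optimising the exponent over $(m_j)$ under the displacement constraint --- the extremal profile is geometric in $j$ and reproduces the hierarchy of the upper bound --- shows the total tends to $0$ as $|x-y|\to\infty$ whenever $L\le(\log|x-y|)^{\Delta-\epsilon}$. Markov's inequality then yields $\P(D(x,y)\le L)\to0$, and since $\P(x,y\in\Cinf)$ is bounded away from $0$ the same holds conditionally on $x,y\in\Cinf$.

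\emph{Main obstacle.} The delicate part is the lower bound: carrying out the constrained optimisation over the length profile $(m_j)$ precisely enough that the exponent comes out exactly $\Delta$ and not something weaker, while keeping the dependence on $\epsilon$ explicit. On the upper-bound side the counterpart difficulty is controlling the dependencies in the recursive ``good box'' estimate so that the union bound over scales costs at most a subpolynomial factor, and --- for \eqref{upper} --- removing even that loss by exploiting the nearest-neighbour edges.
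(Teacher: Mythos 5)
First, note that the paper does not actually prove Theorem \ref{lrp}: it is quoted from Biskup \cite{Bis04}, Trapman \cite{Trp10} and Biskup--Lin \cite{Bis17} as background for the comparison with scale-free percolation, so there is no internal proof to compare against. Judged on its own terms, your sketch of the two upper bounds is in the right spirit (the $\beta=\alpha/(2d)$ renormalisation with one long edge per scale, and the rigid Biskup--Lin scheme when nearest-neighbour edges are present, are indeed how those references proceed). Your lower bound, however, contains a genuine gap. The first-moment count over self-avoiding paths of length $L=(\log N)^{\Delta-\epsilon}$ does not tend to zero: in the supercritical regime the expected number of open edges incident to a fixed vertex, $\sum_{z}(\lambda|z|^{-\alpha}\wedge 1)$, is a constant $\mu$ that is in general larger than $1$, so the profile that puts all of the required displacement into a single edge of length $\approx N$ and leaves the remaining $L-1$ edges unconstrained already contributes order $L\,\mu^{L-1}N^{d-\alpha}$ to your sum. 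Since $\Delta>1$, the factor $\mu^{L}=\exp\bigl((\log N)^{\Delta-\epsilon}\log\mu\bigr)$ beats any fixed power of $N$, so the expected number of open length-$L$ paths from $x$ to $y$ diverges and Markov's inequality gives nothing. In particular your claim that the extremal profile under the single constraint $\sum_j m_j 2^{j+1}\ge|x-y|$ is ``geometric in $j$'' is not correct; the true optimiser concentrates the displacement in one bucket and pays only a polynomial price for it.

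This failure of the naive union bound over paths is precisely the reason Biskup introduced the hierarchy machinery: one classifies each short path by a hierarchy of depth $n\approx\Delta\log\log N$ with only $2^{n}\approx(\log N)^{\Delta}$ sites, shows that a short path forces either a hierarchy violating the gap condition (the analogue of the event $\mathcal{E}_n^c$) or gap-filling paths that are too short (the analogue of $\mathcal{F}_n^c$), and then takes the union bound over hierarchies rather than over paths, which is where the exponent $\Delta$ genuinely comes from. The present paper adapts exactly this argument to scale-free percolation in Section \ref{sec2} (Propositions \ref{prop35}, \ref{prop37} and \ref{prop38}); a correct proof of the lower bound in \eqref{bis04} must follow that route (or simply cite \cite{Bis04}), not a direct path-counting estimate.
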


Trapman \cite{Trp10}, moreover, identified the growth of the balls $\{x\in\Zd:D(0,x)\le n\}$ for LRP with $d<\alpha<2d$.

Now we can describe a coupling between LRP and SFP. To this end, we view the two models from another perspective: to each edge $\{x,y\}$ of the graph, we assign an i.i.d.\ Uniform$[0,1]$-distributed random variable $U_{xy}$. Then, for scale-free percolation model, we consider for each edge $\{x,y\}$ the event 
\begin{align}
A_{x,y}:=\left\lbrace U_{xy}\leq 1-\exp\left(-\lambda\frac{W_xW_y}{|x-y|^{\alpha}}\right)\right\rbrace,
\end{align}
and we make the edge $\{x,y\}$ open whenever $A_{x,y}$ occurs. 
In the same way, for long-range percolation we consider the event
\begin{equation}
B_{x,y}:=\left\lbrace U_{xy}\leq 1-\exp\left(-\lambda\frac{1}{|x-y|^{\alpha}}\right)\right\rbrace.
\end{equation}
We have thus constructed a coupling for the two models: since $W_x\geq 1$ for all $x\in \mathbb{Z}^d$, we have 
\begin{align*}
1-\exp \left(-\lambda \frac{1}{|x-y|^{\alpha}}\right) \leq 1-\exp \left(-\lambda \frac{W_x W_y}{|x-y|^{\alpha}}\right),
\end{align*}
which implies $A_{x,y}\supseteq B_{x,y}$, thus scale-free percolation dominates long-range percolation in the sense that all the open edges in the LRP remain open in SFP. 
We therefore get that distances in LRP are an upper bound for distances in SFP and in particular get the upper bound \eqref{upperbound2}.

For the remaining regimes, there are many rigorous results about the graph distance $D(x,y)$ as $|x-y|\to \infty$. When $\alpha <d$, Benjamini, Kesten, Peres and Schramm \cite{Ben04} show that $D(x,y)$ is bounded by some (explicit) constant. When $\alpha> 2d$, Berger \cite{Ber04} shows that $D(x,y)$ $\geq |x-y|$. For the  borderline case $\alpha=2$ for $d=1$, Ding and Sly \cite{Din13} show that $D(x,y)\approx |x-y|^{\delta}$ for some $\delta\in (0,1)$.

\subsection{Related work} \label{sec-lit}
Various aspects of scale-free percolation (also known as \textit{heterogeneous long-range percolation}) have been investigated in the literature, both on the lattice $\Z^d$ \cite{Dep15, Hey17} as well as a continuum analogue \cite{Dal19, Dep19}, where vertices are given as a Poisson point process. The results in the present paper have been obtained on $\Z^d$, but it appears that we do not make use of the lattice structure in any crucial way, so that analogue results should hold for a continuum version of the model. 
A continuum version of scale-free percolation on finite domain (properly rescaled) is known a \emph{geometric inhomogeneous random graph} (GIRG), see \cite{Bri19, Bri20, Bri17}.

It has been pointed out recently \cite{Gra19, Gra20} that (continuum) scale-free percolation, as well as many other random graphs models, can be understood as special cases of the \emph{weight-dependent random connection models}. 
In the language of \cite{Gra19}, scale-free percolation corresponds to the weight-dependent random-connection model with \emph{product kernel} and polynomial profile function. Mind that the parametrization in \cite{Gra19} is different, see in particular \cite[Table 2]{Gra19}. 

For related recent work on spatial preferential attachment graphs we refer to  \cite{Hir20}.

\subsection{Overview}
We first prove the lower bound in Section \ref{sec2}. More precisely, we show that the probability that there exists a shorter path than the lower bound is negligible. We do this by estimating the path probability and counting the eligible paths using a so-called ``hierarchy'' argument. In Section \ref{sec3} we prove the upper bound with a double edge argument.

\section{Proof of lower Bound}\label{sec2}
In order to prove the lower bound, we derive variants of Biskup's arguments \cite{Bis04} in the setting of scale-free percolation. Similar to \cite{Bis04}, we split up the argument into 3 propositions. 
The key difference between SFP and LRP is that adjacent edges in the former model are only \emph{conditionally} independent. We resolve this by adjusting the definition of a \emph{hierarchy} (below) and combine it with estimates from \cite{Hof13} to break up the dependence structure.
\vspace{3mm}
\begin{definition}\label{hchy}
Given an integer $n\geq 1$ and distinct vertices $x,y\in \mathbb{Z}^d$, we say that the collection
\begin{equation}
\mathcal{H}_n(x,y)=\{(z_{\sigma}):\sigma \in \{0,1\}^k, k=1,2,\dots,n; z_{\sigma}\in \mathbb{Z}^d\}
\end{equation}
is a hierarchy of depth $n$ connecting $x$ and $y$ if
\begin{itemize}
\item[1.] $z_0=x$ and $z_1=y$;
\item[2.] $z_{\sigma 00}=z_{\sigma 0}$ and $z_{\sigma 11}=z_{\sigma 1}$ for all $k=0,1,\dots,n-2$ and all $\sigma \in \{0,1\}^k$;
\item[3.] For all $k=0,1,\dots,n-2$ and all $\sigma \in \{0,1\}^k$ such that $z_{\sigma 01} \neq z_{\sigma 10}$, the edge $\{z_{\sigma 01},z_{\sigma 10}\}$ is open;
\item[4.] Each edge $\{z_{\sigma 01},z_{\sigma 10}\}$ as specified in part 3 appears only once in $\mathcal{H}_n(x,y)$;
\item[5.] For $z_{\sigma_1},z_{\sigma_2}$ in $\mathcal{H}_n(x,y)$ with $k\in\{0,1,\dots,n-1\} $, $\sigma_1,\sigma_2\in \{0,1\}^{k+1}$ and $\sigma_1\neq \sigma_2$ we have that  $z_{\sigma_1}=z_{\sigma_2}$ if and only if there exists $\sigma \in \{0,1\}^k$ such that ${\sigma_1}={\sigma 0}$ and ${\sigma_2}={\sigma 1}$. In this case, we call the vertices $z_{\sigma_1}$ and $z_{\sigma_2}$ degenerate, otherwise non-degenerate.
\end{itemize}
The vertices $(z_{\sigma})$ are called sites of the hierarchy $\mathcal{H}_n(x,y)$.
\end{definition}

\begin{figure}[!htb]

\begin{tikzpicture}
\draw[thick] (-2,1.5) -- (2.5,2.5);
\draw[thick]  (-2.5,-1.5) -- (-1,0.5);
\draw [thick] (-1,0.5) -- (-2.5,1);
\draw [thick] (-1.5,-2.5) -- (-2.5,-1.5);
\draw [thick] (3,0.5) -- (1,-1);
\draw [thick] (1,1.5) -- (2,0.5);
\draw [thick] (1.5,-1.5) -- (3,-2);
\filldraw (-2.5,-2.5) circle (1pt) node[align=left,   below] {$x$=$z_0$};
\filldraw (3.5,-2.5) circle (1pt) node[align=left,   below] {$y$=$z_1$};
\filldraw (-2,1.5) circle (1pt) node[align=left,   above] {$z_{01}$};
\filldraw (2.5,2.5) circle (1pt) node[align=left,   above] {$z_{10}$};
\filldraw (-2.5,-1.5) circle (1pt) node[align=left,   left] {$z_{001}(=z_{0011})$};
\filldraw (-1.5,-2.5) circle (1pt) node[align=left,   right] {$z_{0001}$};
\filldraw (-1,0.5) circle (1pt) node[align=left,   right] {$z_{010}(=z_{0100})$};
\filldraw (-2.5,1) circle (1pt) node[align=left,   below] {$z_{0110}$};
\filldraw (1,-1) circle (1pt) node[align=left,   left] {$z_{110}$};
\filldraw (3,0.5) circle (1pt) node[align=left,   above] {$z_{101}$};
\filldraw (1,1.5) circle (1pt) node[align=left,   above] {$z_{1001}$};
\filldraw (2,0.5) circle (1pt) node[align=left,   below] {$z_{1010}$};
\filldraw (1.5,-1.5) circle (1pt) node[align=left,   above] {$z_{1101}$};
\filldraw (3,-2) circle (1pt) node[align=left,   right] {$z_{1110}$};
\node[] at (-0.5,0) 
    {\normalsize $=z_{0101}$};
\node[] at (-3,-2) 
    {\normalsize $=z_{0010}$};

\end{tikzpicture}

\caption{A hierarchy of depth $4$ with two degenerate sites $z_{001}$ and $z_{010}$}\label{hierarchy}

\end{figure}

In the toy example depicted in Figure \ref{hierarchy}, the reader finds two overlapping sites. For $z_{001}(=z_{0011})$ and $z_{0010}$, there exists $\sigma=(0,0,1)\in \{0,1\}^3$ such that $z_{\sigma 1}=z_{\sigma0}$. Therefore, this is a degenerate site in the sense of Condition 5. Similarly  for $z_{010}$ and $z_{0101}$.

\begin{remark}
With only Conditions 1-4, our definition would coincide with Definition 2.1 in \cite{Bis04}. In addition, we impose Condition 5 to make sure that every element $\left(z_{\sigma}\right) \in \mathcal{H}_n(x,y)$ can be fitted into a vertex self-avoiding path connecting $x$ and $y$. By adding an additional condition, one realises the set of all hierarchies here is a subset of hierarchies defined in \cite{Bis04}, and this will be helpful when we count the eligible hierarchies, e.g.\ in \eqref{counting}.

The hierarchy $\mathcal{H}_n(x,y)$ is essentially a (random) subgraph of the complete graph with vertex set $\mathbb Z^d$. 
Condition 4 ensures that the number of open edges in this subgraph is at most $2^{n-1}$, and Condition 5 guarantees that the degree of all vertices in $\mathcal{H}_n$ is no more than 2.
\end{remark}

Since the shortest path connecting $x$ and $y$ is necessarily vertex self-avoiding, meaning that the weight of a single vertex appears at most twice in the path, we can estimate the probability of such a path by the Cauchy-Schwarz inequality. \\
\begin{lemma}[{\cite[Lemma 4.3]{Hof13}}]\label{singleedge}
Let $x,y\in \mathbb{Z}^d$ be distinct, then for all $\delta>0$, there exists a constant $C_{\delta}:=C(\delta, \lambda)>1$ such that 
\begin{equation}\label{edge}
\mathbb{E}\left[\left(\lambda\frac{W_xW_y}{|x-y|^{\alpha}}\wedge 1\right)^2\right]^{1/2} \leq C_{\delta}|x-y|^{-\alpha_1+\delta},
\end{equation}
where $\alpha_1$ is defined as in \eqref{alpha}.
\end{lemma}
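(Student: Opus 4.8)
The plan is a direct second--moment estimate. Write $r:=|x-y|\ge 1$ and $M:=W_xW_y$. One may assume $r$ is large, since $\mathbb E\big[(\lambda M r^{-\alpha}\wedge 1)^2\big]\le 1$ trivially and $r^{-\alpha_1+\delta}$ is bounded away from $0$ on any bounded range of $r$, so that enlarging $C_\delta$ takes care of the small-$r$ regime (and lets us take $C_\delta>1$). Splitting according to whether the argument of the minimum exceeds $1$,
\begin{equation*}
\mathbb E\!\left[\Big(\lambda\frac{M}{r^{\alpha}}\wedge 1\Big)^{2}\right]
=\frac{\lambda^{2}}{r^{2\alpha}}\,\mathbb E\!\left[M^{2}\,\indic{M\le r^{\alpha}/\lambda}\right]
+\mathbb P\!\left(M>r^{\alpha}/\lambda\right),
\end{equation*}
I would bound the two terms separately and show each is at most $C\,r^{-2\alpha_1+\alpha\delta'}$ for a $\delta'=\delta'(\delta)>0$ to be chosen at the end.

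The key input is a tail bound for the product of two independent Pareto weights. Conditioning on $W_x$ and using $\mathbb P(W_y>s)=s^{-(\tau-1)}\wedge 1$ together with the density $(\tau-1)w^{-\tau}$ of $W_x$ on $[1,\infty)$, one computes for $t\ge 1$
\begin{equation*}
\mathbb P(M>t)=\mathbb P(W_x\ge t)+t^{-(\tau-1)}\,\mathbb E\!\left[W_x^{\tau-1}\,\indic{1\le W_x<t}\right]
= t^{-(\tau-1)}\bigl(1+(\tau-1)\log t\bigr),
\end{equation*}
whence $\mathbb P(M>t)\le C_{\delta'}\,t^{-(\tau-1)+\delta'}$ for every $\delta'>0$ (equivalently, $(\tau-1)\log M$ has a Gamma$(2,1)$ tail). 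Applied with $t=r^{\alpha}/\lambda$ this immediately gives $\mathbb P(M>r^\alpha/\lambda)\le C\,r^{-\alpha(\tau-1)+\alpha\delta'}\le C\,r^{-2\alpha_1+\alpha\delta'}$, using $2\alpha_1\le\alpha(\tau-1)$.

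For the truncated second moment, using $M^2=\int_0^\infty 2t\,\indic{t<M}\,\d t$ and Fubini,
\begin{equation*}
\mathbb E\!\left[M^{2}\,\indic{M\le s}\right]=\int_0^{s}2t\,\mathbb P(t<M\le s)\,\d t
\le 1+2C_{\delta'}\int_1^{s}t^{\,1-(\tau-1)+\delta'}\,\d t ,\qquad s:=r^\alpha/\lambda,
\end{equation*}
and I would split on $\tau$: if $\tau>3$ the integral converges (for $\delta'$ small), so this term is $O(1)$ and after the prefactor $\lambda^2 r^{-2\alpha}$ one gets $C\,r^{-2\alpha}=C\,r^{-2\alpha_1}$, since here $\alpha_1=\alpha$; if $\tau<3$ the integral is of order $s^{\,3-\tau+\delta'}$, giving $C\,r^{-\alpha(\tau-1)+\alpha\delta'}=C\,r^{-2\alpha_1+\alpha\delta'}$, since here $\alpha_1=\alpha(\tau-1)/2$; the boundary $\tau=3$ yields an extra factor $\log r$ absorbed into $r^{\alpha\delta'}$. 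Combining the two terms and taking square roots gives $\mathbb E[(\lambda M r^{-\alpha}\wedge 1)^{2}]^{1/2}\le C\,r^{-\alpha_1+\alpha\delta'/2}$; choosing $\delta'=2\delta/\alpha$ finishes the proof. I do not expect a genuine obstacle here --- the statement is essentially \cite[Lemma 4.3]{Hof13} --- the only points needing care are the logarithmic correction in the product tail (which is exactly what forces the ``$+\delta$'' rather than an exact exponent) and the bookkeeping ensuring that the exponent produced in each case ($\alpha$ versus $\alpha(\tau-1)/2$) matches $\alpha_1$ on the nose.
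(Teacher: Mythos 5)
Your argument is correct: the direct computation of the tail $\mathbb P(W_xW_y>t)=t^{-(\tau-1)}(1+(\tau-1)\log t)$, the truncated second-moment estimate, and the absorption of the logarithmic factor into the exponent $\delta$ together yield exactly the bound $\mathbb E[(\lambda W_xW_y|x-y|^{-\alpha}\wedge 1)^2]\le C(1+\log|x-y|)|x-y|^{-2\alpha_1}$ that the paper simply imports from \cite[Lemma 4.3]{Hof13} before absorbing the logarithm in the same way. So this is essentially the paper's proof, made self-contained by reproving the cited second-moment estimate rather than quoting it.
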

\begin{proof}
From the proof of Lemma 4.3 in \cite{Hof13} we know
\begin{equation}
\mathbb{E}\left[\left(\lambda\frac{W_xW_y}{|x-y|^{\alpha}}\wedge 1\right)^2\right] \leq C_1\left(1+\log |x-y|\right)|x-y|^{-2\alpha_1}.
\end{equation}
for some constant $C_1\in (0,\infty)$. Then for all $\delta>0$, one has
\begin{equation*}
\lim_{r \to \infty}\frac{1+\log r}{r^{2\delta}}=0.
\end{equation*}
Hence there exists a constant $C_2>0$ such that $1+ \log r\leq C_2 r^{2\delta}$ for all $r >0$. Then we choose $C_{\delta}:=\sqrt{C_1C_2} \vee 2$ as desired.
\end{proof}
\begin{remark}
Actually, the estimation above can be further refined for $\tau>3$. If $\tau>3$, the weights $W_x$ and $W_y$ have finite variance. In this case, we can get rid of the $\delta$ in \eqref{edge}. On the other hand, since we can choose $\delta$ arbitrarily small, the refinement does not change our result. For our purpose, we choose $\delta$ small enough that $\alpha-\delta>d$ and $\alpha_1-\delta>d$.
\end{remark}

Now we estimate the probability that a path is open from above. Note that we call $\pi$ a path of length $n$ if there exist $n+1$ distinct vertices $x_0,\dots,x_n\in \mathbb{Z}^d$ such that $\pi=(x_0,\dots,x_n)$. We say that $\pi$ is open if all the edges $\{x_{i-1},x_i\}_{i=1,\dots,n}$ are open.

\begin{lemma}[{\cite[Thm.\ 4.2]{Hof13}}]\label{path}
Let $\pi:=(z_0,z_1,\dots,z_n) \in \left(\mathbb{Z}^d\right)^{n+1}$ be a path of length $n$. Then for all $\delta>0$, 
\begin{equation}
\mathbb{P}\left(\pi\text{ is open}\right)\leq \prod_{i=1}^{n}C_{\delta}|z_i-z_{i-1}|^{-\alpha_1+\delta},
\end{equation}
where the constant $C_{\delta}$ is as in Lemma \ref{singleedge}.
\end{lemma}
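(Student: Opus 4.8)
The plan is to condition on the vertex weights, reduce the problem to a single-edge second moment, and then handle the dependence between the $n$ edge-contributions by an alternating two-colouring of the edges of $\pi$. Since conditionally on the weight configuration $(W_x)_{x\in\Z^d}$ the edges are open independently, one has $\P(\pi\text{ open}\mid (W_x)_x)=\prod_{i=1}^n\big(1-\exp(-\lambda W_{z_{i-1}}W_{z_i}/|z_{i-1}-z_i|^\alpha)\big)$, and the elementary bound $1-e^{-s}\le s\wedge 1$ (for $s\ge0$) shows that the $i$-th factor is at most $u_i:=\big(\lambda W_{z_{i-1}}W_{z_i}/|z_{i-1}-z_i|^\alpha\big)\wedge 1$. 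Taking expectations, it therefore suffices to prove $\E\big[\prod_{i=1}^n u_i\big]\le\prod_{i=1}^n C_\delta|z_i-z_{i-1}|^{-\alpha_1+\delta}$.

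The difficulty is that consecutive factors $u_i$ and $u_{i+1}$ both involve the weight $W_{z_i}$, so one cannot simply factorise the expectation; and a crude Hölder bound over all $n$ edges would yield an exponent that degrades to $0$ as $n\to\infty$, far too weak. Instead I would use that $\pi$ is self-avoiding, so $z_0,\dots,z_n$ are distinct: then the odd-indexed edges $\{z_0,z_1\},\{z_2,z_3\},\dots$ are pairwise vertex-disjoint, and likewise the even-indexed edges $\{z_1,z_2\},\{z_3,z_4\},\dots$. Writing $\prod_{i=1}^n u_i=\big(\prod_{i\ \mathrm{odd}}u_i\big)\big(\prod_{i\ \mathrm{even}}u_i\big)$ and applying the Cauchy--Schwarz inequality gives $\E\big[\prod_{i=1}^n u_i\big]\le\E\big[\prod_{i\ \mathrm{odd}}u_i^2\big]^{1/2}\,\E\big[\prod_{i\ \mathrm{even}}u_i^2\big]^{1/2}$. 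In each bracket the factors depend on disjoint sets of vertices with i.i.d.\ weights, so the expectation factorises into $\prod \E[u_i^2]$; Lemma~\ref{singleedge} then bounds each $\E[u_i^2]^{1/2}$ by $C_\delta|z_i-z_{i-1}|^{-\alpha_1+\delta}$, and recombining the two half-products reproduces exactly the claimed bound.

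The only genuinely delicate step is the decoupling, and the observation that makes it go through is that a single vertex weight enters at most two of the factors $u_i$, so two colour classes --- i.e.\ one application of Cauchy--Schwarz --- are enough; everything else is elementary. This is also precisely where self-avoidance is used, which is why, when the same estimate is later applied along the quasi-paths encoded by a hierarchy (Definition~\ref{hchy}), one has to keep separate track of the degenerate sites before the relevant edges admit such a proper two-colouring.
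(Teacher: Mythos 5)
Your proof is correct and is exactly the argument the paper refers to: it cites the proof of Theorem 4.2 in \cite{Hof13} as ``combining the Cauchy--Schwarz inequality with the alternating independence of the edges in the path,'' which is precisely your odd/even two-colouring followed by one application of Cauchy--Schwarz and the single-edge bound of Lemma~\ref{singleedge}. Nothing further is needed.
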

The proof of Lemma \ref{path} can be found in the proof of Theorem 4.2 in \cite{Hof13}, which combines the Cauchy-Schwarz inequality with the alternating independence of the edges in the path. With Lemma \ref{path}, one realises immediately that SFP behaves similarly to LRP in the sense that they have similar upper bounds for the probability of a path, which also indicates that the lower bound of SFP might be treated similar to LRP.

\begin{definition}
Let $x,y\in \mathbb{Z}^d$ be distinct, $\eta\in \big(0, \alpha_1/\left(2d\right)\big)$, and $n\geq 2$. We define $\mathcal{E}_n=\mathcal{E}_n(\eta)$ as the event that every hierarchy $\mathcal{H}_n(x,y)$ of depth $n$ connecting $x$ and $y$ such that
\begin{equation} \label{bond}
|z_{\sigma 01}-z_{\sigma 10}|\geq |z_{\sigma 0}-z_{\sigma 1}|(\log N)^{-\Delta_1} 
\end{equation}
holds for all $k=0,1,\dots,n-2$, and all $\sigma \in \{0,1\}^k$ also satisfy the bounds
\begin{equation} \label{gap}
\prod_{\sigma \in \{0,1\}^k}|z_{\sigma 0}-z_{\sigma 1}|\vee 1\geq N^{(2\eta)^k} \quad \text{ for all } k=1,2,\dots,n-1,
\end{equation}
where $N=|x-y|$ is the Euclidean distance between $x$ and $y$.
\end{definition}
With help of Lemma \ref{path} we now can estimate the probability of the event $\mathcal{E}_n$. 
\begin{proposition}\label{prop35}
Let $\eta\in \big(0, \alpha_1/(2d)\big)$.  Pick $\delta>0$ so small that $\alpha_1-\delta-d>0$ and $\alpha_1-\delta\in (2d\eta, \alpha_1)$, then there exists a constant $c_1>0$ such that for all $x,y\in \mathbb{Z}^d$ with $N=|x-y|$ satisfying $\eta^n\log N\geq 2(\alpha_1-\delta-d)$,
\begin{equation}
\mathbb{P}\left(\mathcal{E}^c_{n+1}\cap \mathcal{E}_n\right)\leq (\log N)^{c_1 2^n}N^{-(\alpha_1-\delta-2d\eta)(2\eta)^n},
\end{equation}
and
\begin{equation}
\P\left(\mathcal{E}_2^c\right)\leq \left(\log N\right)^{c_1}N^{-(\alpha_1-\delta-2d\eta)}.
\end{equation}
\end{proposition}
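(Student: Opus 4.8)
The plan is to follow Biskup's strategy \cite{Bis04} for bounding the probability that a hierarchy violates the ``gap'' condition \eqref{gap} while still satisfying the ``bond'' condition \eqref{bond}, adapting it to SFP via the path estimate in Lemma \ref{path}. The starting observation is that $\mathcal{E}_{n+1}^c \cap \mathcal{E}_n$ forces the existence of a hierarchy $\mathcal{H}_{n+1}(x,y)$ of depth $n+1$ satisfying \eqref{bond} at every level $k=0,\dots,n-1$, satisfying the gap bound \eqref{gap} at levels $k=1,\dots,n-1$ (because $\mathcal{E}_n$ holds), but \emph{violating} \eqref{gap} at level $k=n$, i.e.\ $\prod_{\sigma\in\{0,1\}^n}|z_{\sigma0}-z_{\sigma1}|\vee1 < N^{(2\eta)^n}$. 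I would first fix the ``skeleton'' of the hierarchy --- the positions of the sites $z_\sigma$ for $|\sigma|\le n$ --- and sum over the $2^n$ new connecting edges $\{z_{\sigma01},z_{\sigma10}\}$ at the top level, then sum over skeletons. The key input is that, by Lemma \ref{path}, the probability that all the open edges prescribed by the hierarchy are present is at most $\prod C_\delta |z_{\sigma01}-z_{\sigma10}|^{-\alpha_1+\delta}$ over all non-degenerate pairs $(z_{\sigma01},z_{\sigma10})$ at all levels (here one uses that the hierarchy embeds into a self-avoiding path, so Lemma \ref{path} applies, and Condition 4 ensures each edge is counted once).

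The core of the argument is the recursive/summation step. For each level $k$ and each $\sigma\in\{0,1\}^k$, given the endpoints $z_{\sigma0},z_{\sigma1}$, one sums over the pair $z_{\sigma01},z_{\sigma10}$ lying in $\mathbb Z^d$ subject to \eqref{bond}, weighting by $C_\delta|z_{\sigma01}-z_{\sigma10}|^{-\alpha_1+\delta}$. Because $\alpha_1-\delta<2d$, the sum of $|w|^{-\alpha_1+\delta}$ over $w$ in a ball of radius comparable to $|z_{\sigma0}-z_{\sigma1}|$ is of order $|z_{\sigma0}-z_{\sigma1}|^{2d-\alpha_1+\delta}$; one also has roughly $|z_{\sigma0}-z_{\sigma1}|^{2d}$ choices for the \emph{location} of the pair (or one absorbs this into a product of two single-variable sums). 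This produces, at each internal vertex, a factor polynomial in $|z_{\sigma0}-z_{\sigma1}|$; tracking the exponents carefully and using that the endpoints $z_{\sigma0},z_{\sigma1}$ of a child pair are exactly $z_{\tau01},z_{\tau10}$ of the parent, the products telescope. At the bottom level $|\sigma|=n$ one is left with a sum, over the configuration of the $2^n$ bottom edges, of $\prod_{\sigma}|z_{\sigma0}-z_{\sigma1}|^{-(\alpha_1-\delta)+\text{(something)}}$; restricting to the event that $\prod_\sigma |z_{\sigma0}-z_{\sigma1}|\vee1 < N^{(2\eta)^n}$ and optimizing gives the bound $N^{-(\alpha_1-\delta-2d\eta)(2\eta)^n}$, while all the accumulated combinatorial and volume factors (there are $\le 2^n$ edges, each contributing a bounded-exponent polynomial correction, plus $\log N$ factors from $C_\delta$ absorbing $|z_i-z_{i-1}|^\delta$ and from the slowly-varying estimate in Lemma \ref{singleedge}) are collected into $(\log N)^{c_1 2^n}$ for a suitable constant $c_1$. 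The condition $\eta^n\log N\ge 2(\alpha_1-\delta-d)$ is exactly what is needed to ensure the geometric sums over the scales converge, i.e.\ that at each level the summation is dominated by its largest scale and the tail is under control; this is where $\alpha_1-\delta-d>0$ enters. The base case $\P(\mathcal{E}_2^c)$ is the same computation with $n=1$: a single new edge $\{z_{01},z_{10}\}$ with $|z_{01}-z_{10}|\ge N(\log N)^{-\Delta_1}$ and $|z_{01}-z_{10}|\vee1<N^{2\eta}$, giving directly $(\log N)^{c_1}N^{-(\alpha_1-\delta-2d\eta)}$.

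The main obstacle, and the place where SFP genuinely differs from LRP, is justifying the use of Lemma \ref{path} across the whole hierarchy: in LRP the edges are independent and one simply multiplies connection probabilities, whereas in SFP adjacent edges share a vertex weight and are only conditionally independent. The resolution is that Condition 5 of Definition \ref{hchy} guarantees every hierarchy sits inside a \emph{vertex} self-avoiding path, so each vertex weight appears at most twice, and Lemma \ref{path} (which is exactly Theorem 4.2 of \cite{Hof13}, built on Cauchy--Schwarz and the alternating-independence trick) already packages this into the clean product bound with exponent $\alpha_1-\delta$ per edge. So once one has set up the hierarchy-to-path embedding correctly, the dependence issue is fully absorbed into Lemma \ref{path}, and the remainder is the (delicate but standard) bookkeeping of geometric sums over scales, exactly paralleling \cite{Bis04}. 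A secondary technical point is that the number of admissible skeletons and the volume factors must be controlled so as not to overwhelm the gain $N^{-(\alpha_1-\delta-2d\eta)(2\eta)^n}$; here the restriction $\alpha_1-\delta\in(2d\eta,\alpha_1)$, i.e.\ $\eta<\alpha_1/(2d)$ up to the $\delta$-correction, is precisely what keeps the net exponent positive.
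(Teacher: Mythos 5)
Your proposal follows essentially the same route as the paper: Biskup's hierarchy-counting argument, with the conditional dependence of SFP edges absorbed into Lemma \ref{path} (this is exactly what Condition 5 of Definition \ref{hchy} buys), followed by the constrained geometric sums over the gap variables $t_\sigma=z_{\sigma0}-z_{\sigma1}$, which the paper carries out via Lemmas A.1 and A.2 of Biskup's appendix. The only place you are looser than the paper is in applying Lemma \ref{path} to the whole hierarchy at once rather than first decomposing its open edges into vertex-disjoint connected components (each a path) and using the independence of these components, but the underlying idea is the same.
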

\begin{proof}
We modify the proof of Lemma 4.5 in \cite{Bis04} to fit our model.\\
Let $\mathcal{A}(n)$ be the set of all $2^n$-tuples $(z_{\sigma})$ of sites (or hierarchies) such that (\ref{bond}) holds for all $\sigma\in \big\{\{0,1\}^k:k=0,1,\dots,n-1\big\}$ and (\ref{gap}) is true  for $k=1,2,\dots,n-1$ but not for $k=n$. Then
\begin{equation}\label{en}
\mathbb{P}\left(\mathcal{E}^c_{n+1}\cap \mathcal{E}_n\right)\leq \sum_{(z_{\sigma})\in \mathcal{A}(n)}\mathbb{P}\left(\mathcal{H}_n(x,y) \text{ with sites }(z_{\sigma})\right).
\end{equation}
Here the event ``$\mathcal{H}_n(x,y) \text{ with sites }(z_{\sigma})$" means all the edges in this hierarchy with sites $(z_{\sigma})$ are open as in Condition 3 in Definition \ref{hchy}.

Now we fix one single hierarchy $\mathcal{H}_n(x,y)$ with sites $(z_{\sigma})$ and estimate its probability. Typically, a hierarchy consists of isolated edges, i.e., edges that do not share a common vertex. However, since we also allow degenerate vertices as in Condition 5 of Definition \ref{hchy}, there might be adjacent edges in the hierarchy. Nevertheless, we can decompose one hierarchy into several disjoint connected components, as exemplified in Figure \ref{hierarchy}. Condition 5 ensures that each of the connected components is an open path. 

\begin{example}
Consider the toy example in Figure \ref{hierarchy}. This hierarchy $\mathcal{H}_4(x,y)$ can be divided into 5 disjoint paths, namely
\begin{align*}
\pi_1=(z_{0110},z_{110},z_{001},z_{0001}),\quad \pi_2=&(z_{01},z_{10}),\\
 \pi_3=(z_{1001},z_{1010}), \quad \pi_4=(z_{101},z_{110}), \quad \pi_5=&(z_{1101},z_{1110}).
\end{align*}
\end{example}

Now assume that the hierarchy $\mathcal{H}_n(x,y)$ can be divided into $m$ disjoint open paths $\pi_i$, $ i=1,2,\dots,m$, with $\pi_i=(x_{i0},x_{i1},\dots,x_{im_i})$ and $x_{ij}\in (z_{\sigma})$. Then independence of edge occupation implies 
\begin{align*}
\mathbb{P}\left(\mathcal{H}_n(x,y) \text{ with sites }(z_{\sigma})\right)&= \mathbb{P}\left(\bigcap_{k=0}^{n-1}\bigcap_{\sigma\in \{0,1\}^k}\{z_{\sigma01}\sim z_{\sigma10}\}\right)\\
&=\mathbb{P}\left(\bigcap_{i=1}^m \{\pi_i \text{ is open}\}\right)=\prod_{i=1}^m\mathbb{P}\left(\pi_i\text{ is open}\right),
\end{align*}
where we rearrange the open edges in the hierarchy in the second step and use the fact that these open paths are vertex-disjoint and therefore independent in the last step. Further, 
\begin{align*}
\mathbb{P}\left(\mathcal{H}_n(x,y) \text{ with sites }(z_{\sigma})\right)&
\leq \prod_{i=1}^m \prod_{j=1}^{m_i}C_{\delta}\left|x_{im_j}-x_{im_{j-1}}\right|^{-\alpha_1+\delta}\\
&=\prod_{k=0}^{n-1} \prod_{\sigma \in \{0,1\}^k}\frac{C_{\delta}}{\left(\left|z_{\sigma01}-z_{\sigma10}\right|\vee 1\right)^{\alpha_1-\delta}},
\end{align*}
where we apply Lemma \ref{path} first and then bring the edges back in the original order again. In the last step we add the maximum with 1 to make sure that the denominator is not zero.\\
Likewise, we denote the ``gaps'' in the hierarchy by
\begin{align}
t_{\sigma}:=z_{\sigma 0}-z_{\sigma 1},
\end{align}
and $t_{\emptyset}:=x-y$.
With this notation, we rewrite condition (\ref{bond}) as 
\begin{align}\label{bond1}
|z_{\sigma 01}-z_{\sigma 10}|\geq |t_{\sigma}|(\log N)^{-\Delta_1}
\end{align}
and condition \eqref{gap} as
\begin{align}\label{gap1}
\prod_{\sigma \in \{0,1\}^k}|t_{\sigma}|\vee 1\geq N^{(2\eta)^k}.
\end{align}
Let $\mathcal{B}(k)$ be the set of all collections $(t_{\sigma})_{\sigma\in \{0,1\}^k}$ of vertices in $\mathbb{Z}^d$ such that \eqref{gap1} is true. Then \eqref{en} implies 
\begin{equation}
\mathbb{P}\left(\mathcal{E}^c_{n+1}\cap\mathcal{E}_{n}\right)\leq \left|\mathcal{B}^c(n)\right|\prod_{k=0}^{n-1}\left(\sum_{(t_{\sigma})\in \mathcal{B}(k)}\prod_{\sigma\in \{0,1\}^k}C_{\delta}\left(\frac{(\log N)^{\Delta_1}}{|t_{\sigma}|\vee 1}\right)^{\alpha_1-\delta}\right)
\end{equation}
Note that for $k=0$, we have $|t_{\emptyset}|=N$. 
Hence the estimation above can be written as
\begin{equation} \label{Bn}
\left|\mathcal{B}^c(n)\right|\frac{\left(C_{\delta}(\log N)^{\Delta_1(\alpha_1-\delta)}\right)^{2^n}}{N^{\alpha_1-\delta}}\prod_{k=1}^{n-1}\left(\sum_{(t_{\sigma})\in \mathcal{B}(k)}\prod_{\sigma\in \{0,1\}^k}\frac{C_{\delta}}{\left(|t_{\sigma}|\vee 1\right)^{\alpha_1-\delta}}\right),
\end{equation}

For each $k$ there are at most $2^k$ multipliers in the product over all $\sigma\in \{0,1\}^k$ (the number is smaller if there exist degenerate sites). Therefore, there are in total $\sum_{k=0}^{n-1}2^k=2^n-1$ and we get the exponent $2^n$ in the numerator in the first fraction.

In addition, for $n=2$, the event $\mathcal{E}^c_2$ means that there exists a hierarchy with sites $(z_{\sigma})$ of depth 2 such that
\begin{equation}
\left|z_{01}-z_{10}\right|\geq \left|z_0-z_1\right|\left(\log N\right)^{-\Delta_1}=N\left(\log N\right)^{-\Delta_1},
\end{equation}
and 
\begin{equation}
|z_0-z_{01}||z_{11}-z_1|\leq N^{2\eta}.
\end{equation}
Therefore
\begin{equation}\label{E2c}
\P(\mathcal{E}^c_2)\leq \sum_{(t_{\sigma})\notin \mathcal{B}(1)}\P(z_{01}\sim z_{10})\leq |\mathcal{B}^c(1)|\frac{C_{\delta}\left(\log N\right)^{\Delta_1(\alpha_1-\delta)}}{N^{\alpha_1-\delta}}.
\end{equation}

In order to estimate \eqref{Bn} and \eqref{E2c}, we need two lemmas from the appendix of \cite{Bis04}.
First for $\kappa\in \mathbb{N}$ and $b>0$, we let
\begin{equation}
\Theta_\kappa(b)=\bigg\{ (n_i)\in \mathbb{N}^\kappa\colon n_i\geq 1, \prod_{i=1}^\kappa n_i\geq b^\kappa\bigg\},
\end{equation}
and $\Theta^c_\kappa(b)$ be its complement in $\N^\kappa$. Then one has the following estimates.
\begin{itemize}
\item[(A1)][Lemma A.1 in \cite{Bis04}]\label{lemmaA1}
For each $\epsilon>0$ there exists a constant $g_1=g_1(\epsilon)<\infty$ such that
\begin{equation}\label{A1}
\sum_{(n_i)\in \Theta_{\kappa}(b)}\prod_{i=1}^\kappa\frac{1}{n_i^{1+\beta}}\leq (g_1b^{-\beta}\log b)^\kappa
\end{equation}
is true for all $\beta>0$, all $b>1$ and all $\kappa\in \N$ with
\begin{equation}
\beta-\frac{\kappa-1}{\kappa\log b}\geq \epsilon.
\end{equation}
\item[(A2)] [Lemma A.2 in \cite{Bis04}]\label{lemmaA2}
There exists a constant $g_2<\infty$ such that for each $\beta>1$, each $b\geq e/4$ and any $\kappa\in \N$,
\begin{equation}
\sum_{(n_i)\in \Theta^c_\kappa(b)}\prod_{i=1}^{\kappa}n_i^{\beta-1}\leq (g_2b^{\beta}\log b)^\kappa.
\end{equation}
\end{itemize}

Let $(n_{\sigma})$ be a collection of positive integers with $n_{\sigma}\leq  |t_{\sigma}|\vee 1<n_{\sigma}+1$. Note that $|\{x\in \mathbb{Z}^d\colon n\leq  |x|\vee 1<n+1\}|\leq cn^{d-1}$ for some positive constant $c=c(d)$ independent of $n$. 
Then for each $n_{\sigma}$ there exists at most $cn_{\sigma}^{d-1}$ such $t_{\sigma}$'s. Therefore,
\begin{align*}
\sum_{(t_{\sigma})\in \mathcal{B}(k)}\prod_{\sigma\in \{0,1\}^k}\frac{C_{\delta}}{\left(|t_{\sigma}|\vee 1\right)^{\alpha_1-\delta}}&\leq \sum_{(n_{\sigma})\in \Theta_{2^k}(N^{\eta^k})}\prod_{\sigma\in \{0,1\}^k}\left(cn_{\sigma}^{d-1}\frac{C_{\delta}}{n^{\alpha_1-\delta}_{\sigma}}\right)\\
&\leq\frac{(C_{\delta}cg_1)^{2k}(\eta^k)^{2^k}(\log N)^{2^k}}{N^{\eta^k2^k(\alpha_1-\delta-d)}},
\end{align*}
where we have applied (A1) in the last step with $\beta=\alpha_1-\delta-d,b=N^{\eta^k}$ and $\kappa=2^k$. Since $\eta<1$, we obtain the further bound
\begin{equation}
\sum_{(t_{\sigma})\in \mathcal{B}(k)}\prod_{\sigma\in \{0,1\}^k}\frac{C_{\delta}}{\left(|t_{\sigma}|\vee 1\right)^{\alpha_1-\delta}}\leq \frac{(C_1\log N)^{2^k}}{N^{(\alpha_1-\delta-d)(2\eta)^k}},
\end{equation}
where we choose $C_1:=cC_{\delta}g_1$.
Now it is left to estimate the size of $\mathcal{B}^c(n)$, and this can be done with help of (A2) as
\begin{equation}
\sum_{(t_{\sigma})\notin \mathcal{B}^c(n)} 1 \leq (C_2\log N)^{2^n}N^{d(2\eta)^n} 
\end{equation}
with $\beta=d,b=N^{\eta^n}$ and $\kappa=2^n$.

Now \eqref{Bn} can be simplified to
\begin{align}
&(C_2\log N)^{2^n}N^{d(2\eta)^n}\frac{\left(C_{\delta}(\log N)^{\Delta_1(\alpha_1-\delta)}\right)^{2^n}}{N^{\alpha_1-\delta}}\prod_{i=1}^{n-1}\frac{(C_1\log N)^{2^k}}{N^{(\alpha_1-\delta-d)(2\eta)^k}}.\nonumber \\
&\leq \left(C_1C_2C_{\delta}(\log N)^{\Delta_1(\alpha_1-\delta)+2}\right)^{2^n}N^{-\left((\alpha_1-\delta-d)\sum_{k=1}^{n-1}(2\eta)^k+\alpha_1-\delta-d(2\eta)^n\right)}\nonumber \\
&\leq (\log N)^{c_1 2^n}N^{-(\alpha_1-\delta-2d\eta)(2\eta)^n},\label{appofa1}
\end{align}
where the last step uses the bound 
\begin{equation}
(\alpha_1-\delta-d)\sum_{k=1}^{n-1}(2\eta)^k+\alpha_1-\delta-d(2\eta)^n\geq (\alpha_1-\delta-2d\eta)(2\eta)^n.
\end{equation}

\end{proof}

Our further strategy is to show that an open path with distance shorter then poly-logarithm is impossible. More precisely, we show that the existence of a shorter path is contained in some event with negligible probability. The event we use is as follows.\\
\begin{definition}\label{fn}
Let $x,y\in \mathbb{Z}^d$ be distinct and $n\in \mathbb{N}$. We define $\mathcal{F}_n:=\mathcal{F}_n(x,y)$ as the event that for every hierarchy of depth $n$ connecting $x$ and $y$ and satisfying \eqref{bond}, every collection of (vertex self-avoiding and) mutually disjoint paths $\pi_{\sigma}$ with $\sigma \in \{0,1\}^{n-1}$ such that $\pi_{\sigma}$ connects $z_{\sigma 0}$ and $z_{\sigma 1}$ without using any vertex from the hierarchy (except for the endpoints $z_{\sigma 0}$ and $z_{\sigma 1}$) obeys the bound
\begin{equation}\label{gapedges}
\sum_{\sigma \in \{0,1\}^{n-1}}|\pi_{\sigma}|\geq 2^n.
\end{equation}
\end{definition}

It might be instructive to look at the complement $\mathcal{F}_n^c$: this is the event that there exists such a hierarchy between $x$ and $y$ satisfying \eqref{bond}, but the edges filling the gaps violate \eqref{gapedges}. 
In the following proposition, we construct such a hierarchy in $\mathcal{F}_n^c$ from the shortest path.

\vspace{1.5mm}
\begin{proposition}[Lemma 4.6 in \cite{Bis04}]\label{prop37}
Let $\epsilon\in (0,\Delta_1)$. If $N=|x-y|$ is sufficiently large and 
\begin{equation}\label{boundforn}
n>\frac{\Delta_1-\epsilon}{\log 2}\log \log N,
\end{equation}
then
\begin{equation}
\{D(x,y)\leq (\log N)^{\Delta_1-\epsilon}\} \cap \mathcal{F}_n=\emptyset.
\end{equation}
\end{proposition}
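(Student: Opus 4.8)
The plan is to argue by contradiction: suppose both $D(x,y)\le(\log N)^{\Delta_1-\epsilon}$ and $\mathcal F_n$ occur, and from the shortest open path $\pi$ construct a hierarchy of depth $n$ satisfying \eqref{bond} for which the gap-filling paths violate \eqref{gapedges}, contradicting $\mathcal F_n$. Concretely, I would build the hierarchy recursively in the style of Biskup's Lemma~4.6. Start with $z_0=x$, $z_1=y$ and $\pi$ the shortest path between them. Given a site pair $(z_{\sigma0},z_{\sigma1})$ together with a shortest sub-path $\pi_\sigma$ joining them, look along $\pi_\sigma$ for the pair of vertices $(u,v)$ that are consecutive endpoints of the \emph{longest} edge (in Euclidean length) used by $\pi_\sigma$; set $z_{\sigma01}:=u$ and $z_{\sigma10}:=v$. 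This splits $\pi_\sigma$ into the portion from $z_{\sigma0}$ to $u$ (which becomes $\pi_{\sigma0}$, after setting $z_{\sigma00}=z_{\sigma0}$ and identifying $z_{\sigma01}$ with its image one level down) and the portion from $v$ to $z_{\sigma1}$ (which becomes $\pi_{\sigma1}$). Because the chosen edge is the longest in $\pi_\sigma$, and there are at most $(\log N)^{\Delta_1-\epsilon}$ edges in $\pi_\sigma$ in total, its length is at least $|z_{\sigma0}-z_{\sigma1}|/(\log N)^{\Delta_1-\epsilon}\ge |z_{\sigma0}-z_{\sigma1}|(\log N)^{-\Delta_1}$ for $N$ large (using $\epsilon>0$), which is exactly the bond condition \eqref{bond}; one also checks the triangle inequality gives the weaker-than-needed version if the longest edge is shorter, but the ``longest edge'' choice makes \eqref{bond} automatic. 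Degenerate sites arise precisely when one of the two sub-paths is empty (the split vertex coincides with an endpoint), and Condition~5 of Definition~\ref{hchy} was set up to accommodate exactly this; one has to verify Conditions 1--5 hold for the object produced, in particular that each selected edge is used only once (Condition~4), which holds because the sub-paths are edge-disjoint segments of the single self-avoiding path $\pi$.

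The recursion is run to depth $n$. The key bookkeeping point is the lower bound \eqref{boundforn} on $n$: at each level the number of edges of $\pi$ that can still lie strictly inside the gap-paths is at least halved in the worst case (each split removes one edge and distributes the rest between two children), so after $n$ levels with $2^n>(\log N)^{\Delta_1-\epsilon}\ge D(x,y)\ge |\pi|$, the leftover gap-paths $\pi_\sigma$, $\sigma\in\{0,1\}^{n-1}$, must have used up all but at most finitely many of the edges of $\pi$; more precisely $\sum_\sigma|\pi_\sigma|\le |\pi| - (\text{number of hierarchy edges extracted}) < 2^n$ once $n$ exceeds the stated threshold, because $|\pi|\le (\log N)^{\Delta_1-\epsilon} < 2^{(\Delta_1-\epsilon)\log\log N/\log 2}\cdot(\dots)\le 2^n$. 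This is the inequality $\sum_{\sigma\in\{0,1\}^{n-1}}|\pi_\sigma| < 2^n$, i.e.\ the negation of \eqref{gapedges}. Since the $\pi_\sigma$ are mutually disjoint and avoid the hierarchy sites except at their endpoints (both properties inherited from $\pi$ being self-avoiding and from the disjointness of the segments), the constructed hierarchy together with these gap-paths witnesses $\mathcal F_n^c$, contradicting the assumption that $\mathcal F_n$ holds. Hence $\{D(x,y)\le(\log N)^{\Delta_1-\epsilon}\}\cap\mathcal F_n=\emptyset$.

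The main obstacle, and the part that needs the most care, is the edge-counting at the end: one must track exactly how many edges of $\pi$ are ``consumed'' as hierarchy bonds versus how many remain inside the gap-paths, and then compare $2^n$ with $|\pi|$ using the precise form of \eqref{boundforn}. The subtlety is that some branches of the recursion terminate early (when a gap-path is already a single edge or empty, producing degenerate sites), so the tree of splits is not full and the naive ``halving'' heuristic has to be replaced by a clean accounting: letting $E$ be the number of non-degenerate bonds extracted, one shows $E + \sum_\sigma|\pi_\sigma| = |\pi|$ exactly (every edge of $\pi$ is either a hierarchy bond or lies in precisely one gap-path), and separately that reaching depth $n$ forces either $E$ large or the recursion to have produced at least $2^{n-1}$ nonempty leaves — either way $\sum_\sigma|\pi_\sigma| < 2^n$ for $n$ above the threshold. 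A secondary technical point is checking that the ``longest-edge'' split indeed yields \eqref{bond} rather than a weaker bound, and that ties are broken in a fixed deterministic way so the construction is well-defined; this is routine but must be stated. All other verifications (Conditions 1--5, disjointness, avoidance of hierarchy vertices) are direct consequences of $\pi$ being a self-avoiding path and are straightforward.
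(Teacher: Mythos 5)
Your proposal is correct and follows essentially the same route as the paper, which simply invokes Biskup's Lemma~4.6 construction (split the shortest path recursively at its longest edge, get \eqref{bond} by pigeonhole, and note that the disjoint gap-paths satisfy $\sum_\sigma|\pi_\sigma|\le|\pi|=D(x,y)\le(\log N)^{\Delta_1-\epsilon}<2^n$ under \eqref{boundforn}) and observes that Condition~5 of the modified hierarchy holds automatically because all vertices of a self-avoiding path have degree at most~2. Your final ``bookkeeping'' paragraph is more elaborate than necessary — the direct bound $\sum_\sigma|\pi_\sigma|\le|\pi|<2^n$ already closes the argument without tracking how many edges are consumed as hierarchy bonds — but nothing in it is wrong.
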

\begin{proof}
The proof of Lemma 4.6 in \cite{Bis04} still holds here for the event with modified hierarchy, because the hierarchy there was constructed from the shortest path in which all the vertices have degree at most 2.
\end{proof}

\vspace{2mm}
Now we start to fill the "gaps" in the hierarchy. More precisely, we relate the events $\mathcal{E}_n$ and $\mathcal{F}_n$ by the following proposition.
\begin{proposition}\label{prop38}
Let $\eta\in (0,\alpha_1/(2d))$. For $\delta>0$ so small that $\alpha_1-\delta-d>0$ and $\alpha_1-\delta\in (2d\eta, \alpha_1)$, there exists a constant $c_2>0$ such that for all distinct $x,y\in \mathbb{Z}^d$ with $N=|x-y|$ satisfying $\eta^n\log N\geq 2(\alpha_1-\delta-d)$,
\begin{equation}
\mathbb{P}\left(\mathcal{F}^c_n\cap \mathcal{E}_n\right)\leq \left(\log N\right)^{c_22^n}N^{-(\alpha_1-\delta)(2\eta)^{n-1}}.
\end{equation}

\end{proposition}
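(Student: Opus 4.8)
The plan is to bound $\mathcal{F}^c_n \cap \mathcal{E}_n$ by summing over all hierarchies $\mathcal{H}_n(x,y)$ satisfying \eqref{bond}, times the probability that a collection of disjoint paths $\pi_\sigma$ filling the gaps exists with $\sum_\sigma |\pi_\sigma| < 2^n$. Fixing a hierarchy and the lengths $(\ell_\sigma)_{\sigma\in\{0,1\}^{n-1}}$ with $\sum_\sigma \ell_\sigma < 2^n$ and each $\ell_\sigma\ge 1$, the key point is that on $\mathcal{F}^c_n$ each $\pi_\sigma$ is a path of length $\ell_\sigma$ from $z_{\sigma 0}$ to $z_{\sigma 1}$ avoiding the hierarchy, and these paths are mutually vertex-disjoint and also disjoint from the (open) edges of the hierarchy. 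Hence the event ``$\mathcal{H}_n$ open AND all the $\pi_\sigma$ open'' is a conjunction over vertex-disjoint subpaths, and by the same rearrangement/independence argument as in Proposition \ref{prop35} (the open paths are vertex-disjoint, therefore the edge-occupation events are mutually independent), the probability factorizes. Applying Lemma \ref{path} to each piece, the hierarchy edges contribute $\prod_{k=0}^{n-1}\prod_{\sigma\in\{0,1\}^k} C_\delta(|z_{\sigma01}-z_{\sigma10}|\vee 1)^{-(\alpha_1-\delta)}$ and each filling path $\pi_\sigma$ of length $\ell_\sigma$ contributes $\prod_{j=1}^{\ell_\sigma} C_\delta |x^{(\sigma)}_j - x^{(\sigma)}_{j-1}|^{-(\alpha_1-\delta)}$.

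Next I would carry out the summation over the intermediate vertices of the filling paths and over the hierarchy sites. For the filling path $\pi_\sigma$ of length $\ell_\sigma\ge 1$ connecting two points at distance $|t_\sigma|$, one sums $\prod_j C_\delta|x_j - x_{j-1}|^{-(\alpha_1-\delta)}$ over all choices of $\ell_\sigma - 1$ interior vertices in $\mathbb{Z}^d$; since $\alpha_1-\delta > d$, this is a convergent sum and a standard estimate (triangle inequality plus the convolution bound $\sum_{z} |z|^{-(\alpha_1-\delta)}|w-z|^{-(\alpha_1-\delta)} \le C|w|^{-(\alpha_1-\delta)}$ valid because $\alpha_1-\delta>d$) gives a bound of the form $(C\log N)^{\ell_\sigma}\, |t_\sigma|^{-(\alpha_1-\delta)}$, up to harmless factors. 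Combining the hierarchy edge $\{z_{\sigma 01}, z_{\sigma 10}\}$ with the two filling paths $\pi_{\sigma 0}, \pi_{\sigma 1}$ hanging below $z_{\sigma 0}, z_{\sigma 1}$ — using \eqref{bond} in the form \eqref{bond1} exactly as in \eqref{Bn} to replace $|z_{\sigma 01}-z_{\sigma 10}|$ by $|t_\sigma|(\log N)^{-\Delta_1}$ — one is left, after these cancellations, with an expression whose spatial part is governed by $\prod_{\sigma\in\{0,1\}^{n-1}} |t_\sigma|^{-(\alpha_1-\delta)}$ summed over the sites, subject to the constraint \eqref{gap1} at level $n-1$ that holds on $\mathcal{E}_n$, namely $\prod_{\sigma\in\{0,1\}^{n-1}}|t_\sigma|\vee 1 \ge N^{(2\eta)^{n-1}}$. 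Applying (A1) with $\kappa = 2^{n-1}$, $b = N^{\eta^{n-1}}$, $\beta = \alpha_1-\delta-d$ then yields the decay $N^{-(\alpha_1-\delta-d)(2\eta)^{n-1}}$ (or, tracking the constants more carefully, $N^{-(\alpha_1-\delta)(2\eta)^{n-1}}$ after also using the surplus from levels $0,\dots,n-2$), with polylogarithmic prefactor $(\log N)^{C 2^n}$.

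Finally I would absorb the remaining bookkeeping: the number of length vectors $(\ell_\sigma)$ with $\sum \ell_\sigma < 2^n$ is at most $2^{2^n}$, which is absorbed into the $(\log N)^{c_2 2^n}$ prefactor (or into a redefinition of $c_2$, since any constant to the power $2^n$ is $\le (\log N)^{C 2^n}$ for $N$ large); the sum over the $2^n - 1$ hierarchy sites not already accounted for as gap endpoints, and the factor $|\mathcal{B}^c|$-type counting, are handled by (A2) exactly as in Proposition \ref{prop35}; and the powers of $C_\delta$, $c$, $g_1$, $g_2$ accumulate only to the power $O(2^n)$ and are swallowed by the polylog factor. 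The condition $\eta^n\log N \ge 2(\alpha_1-\delta-d)$ is precisely what is needed so that the hypothesis $\beta - (\kappa-1)/(\kappa\log b) \ge \epsilon$ of (A1) holds at the relevant level.

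The main obstacle I anticipate is the independence/disjointness argument: one must verify that the filling paths $\pi_\sigma$ can be taken vertex-disjoint from each other \emph{and} from all vertices of the hierarchy (except the shared endpoints $z_{\sigma 0}, z_{\sigma 1}$), so that the edge-occupation events genuinely decompose into independent blocks — this is exactly the role of Definition \ref{fn}'s requirement that $\pi_\sigma$ avoid the hierarchy, and of Condition 5 ensuring the hierarchy itself is a disjoint union of open paths, but it needs to be spelled out that together these give a global collection of vertex-disjoint paths whose union is a subgraph in which every vertex has degree at most $2$, so that Lemma \ref{path} (which crucially uses vertex-self-avoidance for the Cauchy–Schwarz step) applies to each block. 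The one genuinely new estimate beyond Proposition \ref{prop35} is the convolution bound summing a geodesic path of given length against the weight $|\cdot|^{-(\alpha_1-\delta)}$; this is where $\alpha_1-\delta>d$ is used, and it produces the extra $(\log N)^{\ell_\sigma}$ per unit length that, after summing $\sum_\sigma \ell_\sigma < 2^n$, contributes at most $(\log N)^{2^n}$ to the prefactor.
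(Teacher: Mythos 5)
Your proposal follows essentially the same route as the paper's proof: decompose over hierarchies satisfying \eqref{bond} and over the length vectors of the gap-filling paths, factor the probability using vertex-disjointness and Lemma \ref{path}, collapse each filling path via the convolution bound \eqref{fact} (the paper's $Q_m(u,v)$ estimate), convert hierarchy-edge lengths to gaps via \eqref{bond1}, and finish with (A1) plus the surplus from levels $0,\dots,n-2$. The argument is correct; the only cosmetic difference is your extra $(\log N)^{\ell_\sigma}$ per filling path where the paper gets a clean constant $(C_\delta a)^{m_\sigma}$, which you rightly note is absorbed into the $(\log N)^{c_2 2^n}$ prefactor.
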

\vspace{3mm}
The idea of proof is to first fix one hierarchy with the sites $(z_{\sigma})$, and estimate the probability that the paths that fill the gaps of this hierarchy have a certain length. Then the gap-filling paths and the open edges in the hierarchy constitute a path connecting $x$ and $y$. With help of Lemma \ref{path} we get the upper bound by summing over all possible hierarchies.

\begin{proof}
Let $\mathcal{A}^*(n)$ be the set of all collections $(z_{\sigma})$, $\sigma \in \{0,1\}^n$, satisfying \eqref{bond} for $k=0,1,\dots,n-2$ and \eqref{gap} for $k=1,2,\dots,n-1$. Then
\begin{equation}\label{fncen}
\mathbb{P}(\mathcal{F}_n^c \cap \mathcal{E}_n)=\sum_{(z_{\sigma})\in \mathcal{A}^*(n)}\mathbb{P}\left(\mathcal{F}^c_n\cap \mathcal{H}_n \text{ on } (z_{\sigma})\right).
\end{equation}
Here "$\mathcal{F}_n^c\cap\mathcal{H}_n$ on $(z_{\sigma})$" means that $ \mathcal{H}_n$ with sites $(z_{\sigma})$ is a hierarchy satisfying $\mathcal{F}_n^c$, as we have explained after Definition \ref{fn}.

We estimate the summands  on the right hand side of \eqref{fncen} by considering all possible lengths of $\pi_{\sigma}$. More precisely, let $(m_{\sigma})$ be a tuple of non-negative integers for $\sigma \in \{0,1\}^{n-1}$. Then

\begin{equation}\label{gaplength}
\mathbb{P}\left(\mathcal{F}^c_n\cap \mathcal{H}_n \text{ on } (z_{\sigma})\right)=\sum_{(m_{\sigma})}\mathbb{P}\left(\mathcal{F}^c_n\cap \mathcal{H}_n \text{ on } (z_{\sigma}) \text{ with } (|\pi_{\sigma}|)=(m_{\sigma})\right).
\end{equation}
Note that the open path $\pi_{\sigma}$ fills the gap between $z_{\sigma 0}$ and $z_{\sigma 1}$ in $\mathcal{H}_n$ for all $\sigma\in \{0,1\}^{n-1}$. All such open paths together with all the open edges $(z_{\sigma 01}, z_{\sigma 10}),\sigma\in \{0,1\}^{n-2}$, constitute a self-avoiding open path between $x$ and $y$. Let $\Gamma_{\sigma}(m_{\sigma})$ be the set of all path of length $m_{\sigma}$ connecting $z_{\sigma 0}$ and $z_{\sigma 1}$, that is, 
\[ \Gamma_{\sigma}(m_{\sigma})=\big\{\pi:\pi=(x_0,x_1,\dots,x_{m_{\sigma}}) \text{ with } x_0=z_{\sigma 0} \text{ and }x_{m_{\sigma }}=z_{\sigma1}\big\}.\] 
Now we estimate the probability in \eqref{gaplength} as
\begin{align}
&\mathbb{P}\left(\mathcal{F}^c_n\cap \mathcal{H}_n \text{ on } (z_{\sigma}) \text{ with } (|\pi_{\sigma}|)=(m_{\sigma})\right)\nonumber\\
=&\mathbb{E}\left[\mathbb{P}\left(\mathcal{F}^c_n\cap \mathcal{H}_n \text{ on } (z_{\sigma}) \text{ with } (|\pi_{\sigma}|)=(m_{\sigma})\right)|\left(W_x \right)_{x\in \mathbb{Z}^d}\right]\nonumber\\
=&\mathbb{E}\left[\mathbb{P}\left(\bigcap_{\sigma\in \{0,1\}^{n-1}}\{z_{\sigma 0}\stackrel []{\pi_{\sigma}}{\leftrightarrow} z_{\sigma 1}\}\bigcap_{\sigma\in \{0,1\}^{n-2}}\{z_{\sigma01}\sim z_{\sigma 10}\}\middle| \left(W_x\right)_{x\in \mathbb{Z}^d}\right)\right], \label{fullpath}
\end{align}
where $\{z_{\sigma 0}\stackrel []{\pi_{\sigma}}{\leftrightarrow} z_{\sigma 1}\}$ means $\pi_{\sigma}$ connects $z_{\sigma0}$ and $z_{\sigma 1}$.\\
By the conditional independence of edges, we rewrite \eqref{fullpath} as
\begin{align}
&\mathbb{P}\left(\mathcal{F}^c_n\cap\mathcal{H}_n \text{ on }(z_{\sigma}) \text{ with }(|\pi_{\sigma}|)=(m_{\sigma})\right) \nonumber\\
\leq &\sum_{\substack{(\pi_{\sigma}): \pi_{\sigma}=(x_{\sigma 0},\dots,x_{\sigma m_{\sigma}}) \\ \text{vertex-disjoint}}}\mathbb{E}\left[\prod_{\sigma \in \{0,1\}^{n-1}}\mathbb{P}\left(\pi_{\sigma}\middle|\left(W_x\right)_{x \in \mathbb{Z}^d} \right)\prod_{k=0}^{n-2}\prod_{\sigma' \in \{0,1\}^k}p_{z_{\sigma' 01}z_{\sigma' 10}}\right]\nonumber\\
=&\sum_{\substack{(\pi_{\sigma}): \pi_{\sigma}=(x_{\sigma 0},\dots,x_{\sigma m_{\sigma}}) \\ \text{vertex-disjoint}}}\mathbb{E}\left[\prod_{\sigma \in \{0,1\}^{n-1}}\prod_{i=1}^{m_{\sigma}}p_{x_{\sigma (i-1)},x_{\sigma i}}\prod_{k=0}^{n-2}\prod_{\sigma' \in \{0,1\}^k}p_{z_{\sigma' 01}z_{\sigma' 10}}\right] \label{twoparts}
\end{align}
where we sum over all possible paths between $z_{\sigma0}$ and $z_{\sigma1}$ for all $\sigma\in \{0,1\}^{n-1}$ and $p_{xy}$ is the connection probability as in \eqref{prob}.

In the expectation in \eqref{twoparts} the probability is divided into two parts: the first double product involves the edges filling the gaps in the hierarchy while the second double product is about the open edges in the hierarchy.\\

\begin{figure}[!h]

\centering
\begin{tikzpicture}

\draw[thick] (-3,0.5) -- (3,-1);
\draw[thick] (-4.5,-1.5) -- (-2.5,-1);
\draw[thick] (3.2,-0.5) -- (4,0);

\draw[dashed] (-5.5,-1) -- (-4.5,-1.5);
\draw[dashed] (4,0) -- (4.2,-0.8);
\draw[dashed] (3.2,-0.5) -- (3,-1);
\draw[dashed] (4.2,-0.8) -- (4.5,-1);
\draw[dashed] (-3,0.5) -- (-3.5,0);
\draw[dashed] (-3.5,-0) -- (-2.8,-0.2);
\draw[dashed] (-2.5,-1) -- (-2.8,-0.2);

\filldraw (-5.5,-1) circle (1pt) node[align=left,   above] {$x$=$z_0$};
\filldraw (4.5,-1) circle (1pt) node[align=left,   below] {$y$=$z_1$};
\filldraw (-3,0.5) circle (1pt) node[align=left,   above] {$z_{01}$};
\filldraw (3,-1) circle (1pt) node[align=left,   below] {$z_{10}$};

\filldraw (-4.5,-1.5) circle (1pt) node[align=left,   below] {$z_{001}$};
\filldraw (-2.5,-1) circle (1pt) node[align=left,   right] {$z_{010}$};
\filldraw (4,0) circle (1pt) node[align=left,   right] {$z_{110}$};
\filldraw (3.2,-0.5) circle (1pt) node[align=left,   left] {$z_{101}$};

\node[] at (-4.8,-1.1)  {\small $ \pi_{00}$};
\node[] at (-3,-0.5)  {\small $ \pi_{01}$};
\node[] at (3.4,-0.8)  {\small $ \pi_{10}$};
\node[] at (4.5,-0.7)  {\small $ \pi_{11}$};

\end{tikzpicture}
\caption{A hierarchy of depth 3 with site $(z_{\sigma})_{\sigma\in \{0,1\}^3}$. The gap-filling paths are $\{\pi_{\sigma}\}$ with $\sigma\in \{0,1\}^2$. In this example $|\pi_{00}|=1,|\pi_{01}|=3,|\pi_{10}|=1,|\pi_{11}|=2$, and $\sum|\pi_{\sigma}|=7<2^3=8$. We see that the paths here, together with the edges in the hierarchy, form a path connecting $x$ and $y$. }

\end{figure}

Note that all these paths $(\pi_{\sigma})$ have mutually disjoint vertices. Therefore, for fixed sites $(z_{\sigma})$ and fixed paths $(\pi_{\sigma})$, we obtain a self-avoiding open path starting from $x$ and ending in $y$. Now we use Lemma \ref{path} to bound the probability of this path, i.e. the expectation in \eqref{twoparts} as
\begin{align}\nonumber
&\mathbb{E}\left[\prod_{\sigma \in \{0,1\}^{n-1}}\prod_{i=1}^{m_{\sigma}}p_{x_{\sigma (i-1)},x_{\sigma i}}\prod_{k=0}^{n-2}\prod_{\sigma' \in \{0,1\}^k}p_{z_{\sigma' 01}z_{\sigma' 10}}\right]\\
\leq &\prod_{\sigma \in \{0,1\}^{n-1}}\prod_{i=1}^{m_{\sigma}}\frac{C_{\delta}}{\left(|x_{\sigma (i-1)}-x_{\sigma i}|\vee 1\right)^{\alpha_1-\delta}}\prod_{k=0}^{n-2}\prod_{\sigma' \in \{0,1\}^{k}}\frac{C_{\delta}}{|z_{\sigma' 01}-z_{\sigma' 10}|^{\alpha_1-\delta}}.
\end{align}
Then \eqref{twoparts} becomes
\begin{align}\nonumber
&\mathbb{P}\left(\mathcal{F}^c_n\cap\mathcal{H}_n \text{ on }(z_{\sigma}) \text{ with }(|\pi_{\sigma}|)=(m_{\sigma})\right)\\
\leq &\sum_{(\pi_{\sigma})}\prod_{\sigma \in \{0,1\}^{n-1}}\prod_{i=1}^{m_{\sigma}}\frac{C_{\delta}}{\left(|x_{\sigma (i-1)}-x_{\sigma i}|\vee 1\right)^{\alpha_1-\delta}}\prod_{k=0}^{n-2}\prod_{\sigma' \in \{0,1\}^{k}}\frac{C_{\delta}}{|z_{\sigma' 01}-z_{\sigma' 10}|^{\alpha_1-\delta}}\nonumber\\
= &\left(\prod_{\sigma \in \{0,1\}^{n-1}}Q_{m_{\sigma}}(z_{\sigma 0},z_{\sigma 1})\right)\prod_{k=0}^{n-2}\prod_{\sigma' \in \{0,1\}^{k}}\frac{C_{\delta}}{|z_{\sigma' 01}-z_{\sigma' 10}|^{\alpha_1-\delta}}
\end{align}
where
\begin{equation}
Q_m(u,v):=\sum_{\substack{\pi=(x_0,\dots,x_m)\\x_0=u, \text{ },x_m=v}}\prod_{i=1}^{m}\frac{C_{\delta}}{\left(|x_{i-1}-x_i|\vee 1\right)^{\alpha_1-\delta}}.
\end{equation}
Here the sum runs over self-avoiding paths $\pi$ of length $m$, and therefore $Q_m(u,v)$ is the upper bound for the probability that $u$ and $w$ are connected by an open path with length $m$. Note the fact that for all $u,v\in \mathbb{Z}^d$ with $u \neq v$ and $\alpha>d$, there exits a constant $a \in (0,\infty)$, independent of $u$ and $v$, such that
\begin{equation}\label{fact}
\sum_{w\in \mathbb{Z}^d, w\notin \{u,v\}}\frac{1}{|u-w|^{\alpha}}\frac{1}{|v-w|^{\alpha}}\leq \frac{a}{|u-v| ^{\alpha}}.
\end{equation}
The estimate above can be obtained by splitting the sum in two cases: $\{w\in \mathbb{Z}^d: |u-w|\leq |v-w|\}$ and $\{w\in \mathbb{Z}^d: |u-w|> |v-w|\}$. In the first case one has $|v-w|\geq 1/2|u-v|$, and since $\alpha>d$, $\sum_{w\neq u} 1/|u-w|^{\alpha}<\infty$. A similar argument holds for the second case.

Then we can bound $Q_m(u,v)$ from above by applying \eqref{fact} $m$ times iteratively and obtain
\begin{equation}\label{Q}
Q_m(u,v)\leq \frac{(C_{\delta}a)^m}{\left(|u-v|\vee 1\right)^{\alpha_1-\delta}}.
\end{equation}
If we now sum over all the possible combinations of $(m_{\sigma})$ with $\sum_{\sigma}m_{\sigma}<2^n$, we obtain the upper bound
\begin{align}
&\mathbb{P}\left(\mathcal{F}_n^c\cap \mathcal{H}_n \text{ on } (z_{\sigma})\right) \nonumber\\
\leq &\sum_{(m_{\sigma}): \sum_{\sigma}m_{\sigma}<2^n}\left(\prod_{\sigma \in \{0,1\}^{n-1}}Q_{m_{\sigma}}(z_{\sigma 0},z_{\sigma 1})\right)\prod_{k=0}^{n-2}\prod_{\sigma' \in \{0,1\}^{k}}\frac{C_{\delta}}{|z_{\sigma' 01}-z_{\sigma' 10}|^{\alpha_1-\delta}}\nonumber\\
\leq &(4aC_{\delta})^{2^n}\prod_{\sigma\in \{0,1\}^{n-1}}\frac{1}{\left(|z_{\sigma 0}-z_{\sigma 1}|\vee 1\right)^{\alpha_1-\delta}}\prod_{k=0}^{n-2}\prod_{\sigma' \in \{0,1\}^{k}}\frac{C_{\delta}}{|z_{\sigma' 01}-z_{\sigma' 10}|^{\alpha_1-\delta}}\nonumber\\
\leq &(4aC_{\delta})^{2^n} \prod_{k=0}^{n-1}\prod_{\sigma \in \{0,1\}^k}\frac{C_{\delta}(\log N)^{(\alpha_1-\delta)\Delta'}}{\left(|z_{\sigma 0}-z_{\sigma 1}|\vee 1\right)^{\alpha_1-\delta}}.\label{numberofcom}
\end{align}
Here we first used the estimation for $Q_m(u,v)$ in \eqref{Q} and the fact that the number of such eligible tuples $(m_{\sigma})$ is at most $4^{2^n}$, and subsequently used the fact that on $\mathcal{E}_n$ the lengths of open edges in the hierarchy are subject to the constrain \eqref{bond1}.

\medskip
We now can estimate the desired probability as
\begin{align}
\mathbb{P}(\mathcal{F}^c_n\cap \mathcal{E}_n)=&\sum_{(z_{\sigma})\in \mathcal{A}^*(n)}(4aC_{\delta})^{2^n} \prod_{k=0}^{n-1}\prod_{\sigma \in \{0,1\}^k}\frac{C_{\delta}(\log N)^{(\alpha_1-\delta)\Delta'}}{\left(|z_{\sigma 0}-z_{\sigma 1}|\vee 1\right)^{\alpha_1-\delta}}\label{counting}\\
&\leq \frac{\left(C_1(\log N)^{\Delta_1(\alpha_1-\delta)}\right)^{2^n}}{N^{\alpha_1-\delta}}\prod_{k=0}^{n-1}\sum_{(t_{\sigma})\in \mathcal{B}(k)}\prod_{\sigma\in \{0,1\}^k}\frac{C_{\delta}}{\left(|t_{\sigma}|\vee 1\right)^{\alpha_1-\delta}}.
\end{align}
Recall that $\mathcal{B}(k)$ is the set of all collections $(t_{\sigma}),\sigma\in \{0,1\}^k$, of vertices in $\mathbb{Z}^d$ such that \eqref{gap1} is true. Then by applying \eqref{A1}
again (as in \eqref{appofa1}), together with
\begin{equation}
\alpha_1-\delta+(\alpha_1-\delta)\sum_{k=1}^{n-1}(2\eta)^k\geq (\alpha_1-\delta)(2\eta)^{n-1},
\end{equation}
the result follows.
\end{proof}

\bigskip

\begin{proof}[Proof of Theorem \ref{thm21}, lower bound]
By Proposition \ref{prop37} we can bound the probability of the event $\{D(x,y)\leq (\log N)^{\Delta_1-\epsilon}\}$ by the probability of the event  $\mathcal{F}_n^c$ once \ref{prop37} holds. That is, if the depth of the hierarchy $n$ satisfies (\ref{boundforn}),
\begin{align}
&\P\left(D(x,y)\leq (\log N)^{\Delta_1-\epsilon}\right)
\leq \P\left(\mathcal{F}^c_n\right).\label{twoprob}
\end{align}

Now we fix $\epsilon\in(0,\Delta_1-1)$. Since $2^{-\nicefrac1{\Delta_1}}=\nicefrac{\alpha_1}{2d}$ by \eqref{delta}, we can choose $\delta>0$ and $\eta$ such that 
\begin{equation}\label{eta}
2^{-\nicefrac{1}{(\Delta_1-\epsilon})}<\eta<\frac{\alpha_1-\delta}{2d},
\end{equation}
so that, in particular, 
 $\frac{\Delta_1-\epsilon}{\log 2}<\frac{1}{\log 1/\eta}$. 
We further fix $\delta_1\in (0, \alpha_1-\delta-2d\eta)$. 
For large $N$ we thus find $n\in\N$ such that 
\begin{equation} \label{nforsum2}
	\frac{\Delta_1-\epsilon}{\log 2}\log \log N
	<n
	\leq \frac{\log \log N+\log\frac{\delta_1}{c_1}-\log\log\log N}{\log 1/\eta}.
\end{equation}
We henceforth assume that $N$ is large enough that (for $c_1$ from Proposition \ref{prop35})
\begin{equation}\label{nforsum3a}
(\log N)^{c_12^n}\leq N^{\delta_1(2\eta)^n}.
\end{equation}
In this case, the right hand side of \eqref{nforsum2} is further bounded from above by 
\begin{equation}\label{nforsum3}
	\frac{\log\log N-\log2(\alpha_1-\delta-d)}{\log1 /\eta}. 
\end{equation}
Therefore, we may apply the assertions of Propositions \ref{prop35}, \ref{prop37} and \ref{prop38} (Proposition \ref{prop35} even for all smaller values of $n$), and we thus get  
\begin{align}\nonumber
\P\left(D(x,y)\leq (\log N)^{\Delta_1-\epsilon}\right)
&\leq \P\left(\mathcal{F}^c_n\right)\leq \P\left(\mathcal{E}^c_n\right)+\P\left(\mathcal{F}_n^c\cap\mathcal{E}_n\right)\\
&\leq \sum_{k=3}^n\P(\mathcal{E}^c_k\cap\mathcal{E}_{k-1})+\P(\mathcal{E}_2^c)+\P\left(\mathcal{F}_n^c\cap\mathcal{E}_n\right).\label{eqDlowerBd} 
\end{align}
Using Proposition \ref{prop35} and \eqref{nforsum3a}, we get for $k\leq n$ that 
\begin{equation}
\P(\mathcal{E}^c_{k+1}\cap\mathcal{E}_k)\leq N^{-(\alpha_1-\delta-2d\eta-\delta_1)(2\eta)^k}, 
\end{equation}
and Proposition \ref{prop38} yields a similar bound for $\P\left(\mathcal{F}_n^c\cap\mathcal{E}_n\right)$. 
Since $2\eta>1$, we thus get the right hand side of \eqref{eqDlowerBd} arbitrarily close to 0 by choosing $N$ sufficiently large. 

\color{black}

Translation invariance and the FKG-inequality yield
\begin{equation}
\P(x,y\in \mathcal{C}_{\infty})\geq \P(x\in \mathcal{C}_{\infty})^2>0.
\end{equation}
Therefore, we have
\begin{equation*}
\lim_{|x-y|\to \infty}\P\left(D(x,y)\leq \left(\log |x-y|\right)^{\Delta_1-\epsilon}\bigg|x,y\in \mathcal{C}_{\infty}\right)=0,
\end{equation*}
as desired.
\end{proof}

\section{Proof of upper bound}\label{sec3}

The upper bound in (b) and (c) of Theorem \ref{thm21} is already established in \cite{Dep15}, so that we can restrict our attention here to the case $\tau\in (2,3)$. Interestingly, for $\tau>3$, the logarithmic power of upper and lower bound match, and we thus identified the correct exponent.

Unlike in long-range percolation, edges in scale-free percolation are only conditionally independent. Intuitively speaking, adjacent edges are positively correlated due to the weight of their joint vertex (see Exercise 9.40 in Chapter 9 of \cite{Hof19}). Here we state a more general result, which is implied by the FKG-Inequality.

\begin{proposition}\label{pro31}
Let $\pi=(x_i)_{i=0,\dots,n}$ be a path in scale-free percolation and $k\in\{1,\dots,n-1\}$, and let $\pi_1,\pi_2$ be two subpaths of $\pi$ by cutting $\pi$ at vertex $x_k$. That is, $\pi_1=(x_i)_{i=0,\dots,k}$ and $\pi_2=(x_i)_{i=k+1,\dots,n}$. Then
\begin{equation}
\mathbb{P}\left(\pi \text{ is open}\right)\geq \mathbb{P}\left(\pi_1\text{ is open}\right)\mathbb{P}\left(\pi_2\text{ is open}\right).
\end{equation}
\end{proposition}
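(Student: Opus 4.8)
The plan is to condition on the vertex weights and then invoke the FKG inequality, along the lines hinted at in the sentence preceding the proposition. Conditionally on the weights $(W_x)_{x\in\Zd}$, the edges are independent, each edge $\{u,v\}$ being present with probability $p_{u,v}$ as in \eqref{prob}. Since $\pi$ is a path, its vertices $x_0,\dots,x_n$ are distinct, so the edge set of $\pi$ is the disjoint union of the edge sets of $\pi_1$ and $\pi_2$, the two subpaths meeting only in the cut vertex $x_k$. In particular $\{\pi\text{ open}\}=\{\pi_1\text{ open}\}\cap\{\pi_2\text{ open}\}$, and conditionally on the weights these two events depend on disjoint sets of edges and are therefore independent; hence
\[
  \mathbb{P}\bigl(\pi\text{ open}\,\big|\,(W_x)_x\bigr)
  =\mathbb{P}\bigl(\pi_1\text{ open}\,\big|\,(W_x)_x\bigr)\,
   \mathbb{P}\bigl(\pi_2\text{ open}\,\big|\,(W_x)_x\bigr).
\]

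Next I would set $f:=\mathbb{P}(\pi_1\text{ open}\mid(W_x)_x)=\prod_{i=1}^{k}p_{x_{i-1},x_i}$ and $g:=\mathbb{P}(\pi_2\text{ open}\mid(W_x)_x)=\prod_{i=k+1}^{n}p_{x_{i-1},x_i}$, so that $f$ is a function of $W_{x_0},\dots,W_{x_k}$ only and $g$ of $W_{x_k},\dots,W_{x_n}$ only. Each factor $p_{u,v}=1-\exp(-\lambda W_uW_v/|u-v|^{\alpha})$ is nondecreasing in $W_u$ and in $W_v$, and a product of nonnegative nondecreasing functions is nondecreasing, so $f$ and $g$ are both nondecreasing functions of the i.i.d.\ weights. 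Their joint law being a product measure, the FKG inequality gives $\mathbb{E}[fg]\ge\mathbb{E}[f]\,\mathbb{E}[g]$, and taking expectations in the displayed identity yields
\[
  \mathbb{P}(\pi\text{ open})=\mathbb{E}[fg]\ge\mathbb{E}[f]\,\mathbb{E}[g]
  =\mathbb{P}(\pi_1\text{ open})\,\mathbb{P}(\pi_2\text{ open}),
\]
as claimed. (One could instead condition only on $W_{x_k}$: given $W_{x_k}$ the factors $f$ and $g$ are independent, while $w\mapsto\mathbb{E}[f\mid W_{x_k}=w]$ and $w\mapsto\mathbb{E}[g\mid W_{x_k}=w]$ are nondecreasing, so the one-dimensional correlation inequality for monotone functions of a single real random variable already suffices; alternatively one may argue directly in the Uniform$[0,1]$ representation of Section \ref{lrpi}, where $\{\pi_1\text{ open}\}$ and $\{\pi_2\text{ open}\}$ are increasing events for a product measure.)

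I do not expect a real obstacle: the statement is essentially that two increasing events are positively correlated, and the proof is short. The only points needing a moment's care are the edge-disjointness of $\pi_1$ and $\pi_2$ — which uses that $\pi$ is vertex-self-avoiding, so that the two pieces share only the vertex $x_k$ — and the coordinatewise monotonicity of $f$ and $g$, which is immediate from the explicit form of the connection probabilities. The conceptual point, and the reason one obtains ``$\ge$'' rather than the equality that independence would give, is precisely that $\pi_1$ and $\pi_2$ share the vertex $x_k$ and hence its weight $W_{x_k}$, which couples the two events positively; this is the ``adjacent edges are positively correlated'' phenomenon mentioned above.
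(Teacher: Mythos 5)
Your proof is correct and is essentially the argument the paper intends: the paper gives no written proof of this proposition, merely noting that it is ``implied by the FKG-inequality,'' and your conditioning on the weights followed by Harris/FKG for two coordinatewise nondecreasing functions sharing only the variable $W_{x_k}$ fills in exactly that argument. You also sensibly read $\pi_2$ as the subpath starting at $x_k$, so that the edge $\{x_k,x_{k+1}\}$ is not dropped (the index range $i=k+1,\dots,n$ in the statement is evidently a slip, since otherwise the claimed inequality would fail).
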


From Proposition \ref{pro31} we see that two adjacent edges (or even paths) in scale-free percolation are indeed positively correlated. The next result tells us that in some cases the positive correlation is significant.

\begin{proposition}[Probability of adjacent edges]\label{adjedge}
In scale-free percolation with $\tau \in (2,3)$ there exist $x_0>0$ and $c_2>c_1>0$ such that for all $x,y$ and $z\in \Z^d$ with $|x-y|\geq |y-z|\geq x_0$, we have
\begin{equation}\label{adjacentedges}
c_1|x-y|^{-\alpha}|y-z|^{-\alpha(\tau-2)}\leq \P(x\sim y\sim z)\leq c_2|x-y|^{-\alpha}|y-z|^{-\alpha(\tau-2)}.
\end{equation}
\end{proposition}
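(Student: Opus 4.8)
The plan is to compute $\P(x\sim y\sim z)$ by first conditioning on the three weights $W_x,W_y,W_z$ and then integrating out. Conditionally on the weights, the two edges $\{x,y\}$ and $\{y,z\}$ are independent, so
\[
\P(x\sim y\sim z)=\E\Big[\big(1-e^{-\lambda W_xW_y/|x-y|^\alpha}\big)\big(1-e^{-\lambda W_yW_z/|y-z|^\alpha}\big)\Big].
\]
The dominant contribution comes from configurations where $W_y$ is atypically large. The heuristic is: to make both edges present cheaply, it pays to boost the \emph{shared} vertex weight $W_y$. When $W_y\approx t$ with $t$ large, the first factor is of order $\min(1,W_xt/|x-y|^\alpha)$ and the second of order $\min(1,W_zt/|y-z|^\alpha)$; since $|x-y|\ge|y-z|$, the first edge is the "harder" one. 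Integrating $W_x$ (power-law, tail exponent $\tau-1$) against $1-e^{-\lambda W_x t/|x-y|^\alpha}$ gives roughly $(t/|x-y|^\alpha)\wedge 1$ up to a logarithmic/constant factor when $\tau>2$ (finite mean), and similarly for $W_z$. One is then left with $\E[((tW_{\cdot})/|x-y|^\alpha\wedge1)((tW_{\cdot})/|y-z|^\alpha\wedge1)]$-type expressions; the key single integral is $\int_1^\infty \big((t s/R^\alpha)\wedge 1\big)\,(\tau-1)s^{-\tau}\,\d s\asymp (t/R^\alpha)$ for $t\le R^\alpha$. Carrying this through and finally integrating over $W_y$ with density $(\tau-1)t^{-\tau}$ produces $\int t^{-\tau}\cdot (t/|x-y|^\alpha)\cdot(t/|y-z|^\alpha)\,\d t$ on the range $t\le|y-z|^\alpha$, which (using $\tau<3$, so the integral is dominated by its upper endpoint) evaluates to a constant times $|x-y|^{-\alpha}|y-z|^{-\alpha(\tau-3+1)}=|x-y|^{-\alpha}|y-z|^{-\alpha(\tau-2)}$, exactly the claimed order. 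The contributions from the regime $t\ge|y-z|^\alpha$ (where the second factor saturates at $1$) and from $t\ge|x-y|^\alpha$ (both saturate) are of strictly smaller order because $\tau>2$.

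I would organize this as follows. First, record the conditional factorization and the elementary two-sided bound $\tfrac12(u\wedge1)\le 1-e^{-u}\le u\wedge1$ to replace $1-e^{-\lambda W_uW_v/R^\alpha}$ by $(\lambda W_uW_v/R^\alpha)\wedge1$ up to constants; this immediately reduces both the upper and lower bound to estimating a purely deterministic integral of min-functions against power-law densities. Second, prove the one-variable lemma: for $R\ge1$, $t\ge1$, one has $c\,\big((t/R^\alpha)\wedge1\big)\le \E[(\lambda W t/R^\alpha)\wedge1]\le C\,\big((t/R^\alpha)\wedge1\big)$, with constants depending only on $\lambda,\tau,d$ — here $\tau>2$ ensures the integral $\int_1^\infty s^{1-\tau}\,\d s$ converges so no logarithm appears (this is where the hypothesis $\tau>2$, rather than just $\tau>1$, is used). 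Third, substitute both single-variable estimates and perform the remaining integral over $W_y$, splitting $\{1\le t\le|y-z|^\alpha\}$, $\{|y-z|^\alpha<t\le|x-y|^\alpha\}$, $\{t>|x-y|^\alpha\}$, and check in each piece that the exponent of $t$ in the integrand makes the total of order $|x-y|^{-\alpha}|y-z|^{-\alpha(\tau-2)}$ (dominant piece) or smaller; the condition $\tau<3$ is what makes the first integral dominated by its \emph{upper} limit $|y-z|^\alpha$ rather than the lower limit $1$. The lower bound follows the same computation with the reversed inequality in the $1-e^{-u}\ge\tfrac12(u\wedge1)$ step, restricting attention to the single regime $\{1\le t\le|y-z|^\alpha\}$, which already yields the matching lower order; one only needs $|y-z|\ge x_0$ large enough that $|y-z|^\alpha\ge e$ (say) so that the endpoint-dominated integral is bounded below by a constant multiple of its integrand at $t\asymp|y-z|^\alpha$.

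The main obstacle I anticipate is bookkeeping the three-variable integral cleanly rather than any conceptual difficulty: one must be careful that the ``$\wedge1$'' truncations interact correctly across the three regimes of $W_y$ and that the constants in the one-variable lemma are genuinely uniform in $R$ (not just in $t$), which requires treating separately the cases $t\le R^\alpha$ and $t>R^\alpha$ inside that lemma. A secondary subtlety is that in the lower bound one cannot simply drop the $W_x$ and $W_z$ integrations to a single deterministic value — one should fix $W_x,W_z\in[1,2]$, say, which occurs with positive probability, and then only $W_y$ needs to be integrated over its large-value range; this sidesteps any worry about correlation between the truncation events. Finally, one should remark (as the paper does just before the statement) that this sharp two-sided asymptotics stands in contrast to the cruder bound of Lemma \ref{path}, whose exponent $\alpha_1$ per edge is \emph{not} what appears here when $\tau<3$, since $\alpha_1=\alpha\wedge\gamma d/2$ while the adjacent-edge rate splits asymmetrically as $\alpha$ on the long edge and $\alpha(\tau-2)$ on the short one.
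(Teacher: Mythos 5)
Your proposal is correct and follows essentially the same route as the paper: sandwich $1-e^{-t}$ between $\tfrac12(t\wedge1)$ and $t\wedge1$, eliminate $W_x$ and $W_z$ using only that $\tau>2$ gives them finite mean, and then evaluate the remaining one-dimensional integral over $W_y$ by splitting at $|y-z|^\alpha/\lambda$ and $|x-y|^\alpha/\lambda$, with $\tau<3$ forcing the first regime to dominate. The only cosmetic difference is that you integrate out $W_x,W_z$ via a one-variable lemma whereas the paper uses the pointwise bound $st\wedge1\le s(t\wedge1)$ to pull out the factor $\mu^2=\E[W]^2$; both reduce to the identical computation.
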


\begin{proof}
We start by calculating the probability of this joint occurrence as
\begin{equation}
\P(x\sim y\sim z)=\mathbb{E}\left[\left(1-\exp\left(-\frac{\lambda W_xW_y}{|x-y|^{\alpha}}\right) \right)\left(1-\exp\left(-\frac{\lambda W_yW_z}{|y-z|^{\alpha}}\right) \right)\right].
\end{equation}
For $t\in(0,\infty)$, one has
\begin{equation}\label{inequality}
\frac{1}{2}(t\wedge 1)\leq 1-e^{-t}\leq t\wedge 1,
\end{equation}
so that 
\begin{equation}\label{35}
\P(x\sim y\sim z) \leq \mathbb{E}\left[\left(\frac{\lambda W_xW_y}{|x-y|^{\alpha}}\wedge 1\right)\left(\frac{\lambda W_yW_z}{|y-z|^{\alpha}} \wedge 1\right)\right]\leq 4\,\P(x\sim y\sim z),
\end{equation}
and it is sufficient to compute the expectation in the middle. 

First we show that the two single weights $W_x$ and $W_z$ do not play a role in the result. 
On the one hand, we know $W_x\geq 1$, therefore
\begin{equation}
\mathbb{E}\left[\left(\frac{\lambda W_xW_y}{|x-y|^{\alpha}}\wedge 1\right)\left(\frac{\lambda W_yW_z}{|y-z|^{\alpha}} \wedge 1\right)\right]\geq \mathbb{E}\left[\left(\frac{\lambda W_y}{|x-y|^{\alpha}}\wedge 1\right)\left(\frac{\lambda W_y}{|y-z|^{\alpha}} \wedge 1\right)\right]
\end{equation}
One the other hand, 
\begin{equation}
st\wedge 1 \leq s(t\wedge 1) \qquad(s\geq 1 \text{ and }t > 0)
\end{equation}
implies 
\begin{align}
\mathbb{E}\left[\left(\frac{\lambda W_xW_y}{|x-y|^{\alpha}}\wedge 1\right)\left(\frac{\lambda W_yW_z}{|y-z|^{\alpha}} \wedge 1\right)\right] &\leq \mathbb{E}\left[W_x\left(\frac{\lambda W_y}{|x-y|^{\alpha}}\wedge 1\right)\left(\frac{\lambda W_y}{|y-z|^{\alpha}} \wedge 1\right)W_z\right] \nonumber \\
&={\mu}^2\mathbb{E}\left[\left(\frac{\lambda W_y}{|x-y|^{\alpha}}\wedge 1\right)\left(\frac{\lambda W_y}{|y-z|^{\alpha}} \wedge 1\right)\right],
\end{align}
where $\mu:=\mathbb{E}[W_x]< \infty$ since $\tau >2$.
Together with \eqref{35}, we thus obtain
\begin{equation}
\frac{1}{\mu^2}\mathbb{P}(x\sim y \sim z) \leq  \mathbb{E}\left[\left(\frac{\lambda W_y}{|x-y|^{\alpha}}\wedge 1\right)\left(\frac{\lambda W_y}{|y-z|^{\alpha}} \wedge 1\right)\right]\leq 4\P(x\sim y\sim z).
\end{equation}
Thus it suffices to compute the expectation
\begin{align*}
&\mathbb{E}\left[\left(\frac{\lambda W_y}{|x-y|^{\alpha}}\wedge 1\right)\left(\frac{\lambda W_y}{|y-z|^{\alpha}} \wedge 1\right)\right]\\
=&\int_{\mathbb{R}} \left(\frac{\lambda u}{|x-y|^{\alpha}}\wedge 1\right)\left(\frac{\lambda u }{|y-z|^{\alpha}} \wedge 1\right) dW_y(u)\\
=&\int_{1}^{\infty}\left(\frac{\lambda u}{|x-y|^{\alpha}}\wedge 1\right)\left(\frac{\lambda u }{|y-z|^{\alpha}} \wedge 1\right)(\tau-1)u^{-\tau}du
\end{align*}
We now split the domain of integration into three intervals: $[1,|y-z|^{\alpha}/\lambda]$, $|y-z|^{\alpha}/\lambda,|x-y|^{\alpha}/\lambda]$ and $(|x-y|^{\alpha}/\lambda, \infty)$. 
After some calculation, one obtains
\begin{align*}
&\mathbb{E}\left[\left(\frac{\lambda W_y}{|x-y|^{\alpha}}\wedge 1\right)\left(\frac{\lambda W_y}{|y-z|^{\alpha}} \wedge 1\right)\right]\\
=&\frac{\tau-1}{(3-\tau)(\tau-2)}\frac{\lambda}{|x-y|^{\alpha}}\frac{\lambda^{\tau-2}}{|y-z|^{\alpha(\tau-2)}}
-\frac{\tau-1}{3-\tau}\frac{\lambda}{|x-y|^{\alpha}}\frac{\lambda}{|y-z|^{\alpha}}-\frac{\lambda^{\tau-1}}{\tau-2}\frac{1}{|x-y|^{\alpha(\tau-1)}}.
\end{align*}
We thus may choose $c_2:=\frac{\tau-1}{(3-\tau)(\tau-2)}\mu^2\lambda^{\tau-1}$. 
For $\tau\in (2,3)$, we find that the first term dominates the sum when $|y-z|\rightarrow\infty$ (the other terms are negative, but the total sum is trivially nonnegative). 
Hence there exist positive constant $x_0$ and $c_1$ such that 
\begin{equation}
\P(x\sim y\sim z)\geq c_1|x-y|^{-\alpha}|y-z|^{-\alpha(\tau-2)} \quad \text{for }|y-z|\geq x_0.
\end{equation}
\end{proof}

In fact, the weights of two end points do not contribute to the significant positive correlation in Proposition \ref{adjedge}, as we formulate in the next corollary. 

\begin{corollary}\label{cor314}
In scale-free percolation with $\tau \in (2,3)$, there exist constants $c_i=c_i(a,b)>0$ for $i=1,2$ and $x_0=x_0(a,b)>0$ such that for all $x,y$ and $z\in \Z^d$ with $|x-y|\geq |y-z|\geq x_0$ we have 
\begin{equation*}
c_1|x-y|^{-\alpha}|y-z|^{-\alpha(\tau-2)}\leq \P\left(x\sim y\sim z|W_x=a,W_z=b\right)\leq c_2|x-y|^{-\alpha}|y-z|^{-\alpha(\tau-2)}.
\end{equation*}
In particular, for constants $M>m>0$, there exist $C_i=C_i(a,b, m, M)>0,i=1,2$ and $x_0'=x_0'(a,b)>0$ such that if $|x-y|$ and $|y-z|$ are comparable in the sense 
\begin{equation*}
m|x-y|\leq |y-z| \leq M|x-y|,
\end{equation*}
then
\begin{align*}
C_1|x-y|^{-\alpha(\tau-1)/2}|y-z|^{-\alpha(\tau-1)/2}\leq \P\left( x\sim y\sim z|W_x=a,W_z=b\right)\\
\leq C_2|x-y|^{-\alpha(\tau-1)/2}|y-z|^{-\alpha(\tau-1)/2},
\end{align*}
for all $|x-y|\geq x_0'$.
\end{corollary}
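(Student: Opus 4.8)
The plan is to deduce Corollary~\ref{cor314} directly from Proposition~\ref{adjedge} by pure monotonicity, without redoing any integral. The observation that makes this cheap is that all weights satisfy $W_x\geq 1$ almost surely, so the conditioning values $a,b$ are necessarily $\geq 1$, and conditioning on larger endpoint weights can only \emph{increase} the connection probability.

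For the lower bound, I would first use that $t\mapsto 1-e^{-\lambda t W_y/|x-y|^{\alpha}}$ is nondecreasing, together with $a,b\geq 1$, to get
\[
\P\bigl(x\sim y\sim z\mid W_x=a,\,W_z=b\bigr)\;\geq\;\P\bigl(x\sim y\sim z\mid W_x=1,\,W_z=1\bigr).
\]
Then, writing the right-hand side as $\E\bigl[(1-e^{-\lambda W_y/|x-y|^{\alpha}})(1-e^{-\lambda W_y/|y-z|^{\alpha}})\bigr]$ and invoking $1-e^{-t}\geq\tfrac12(t\wedge 1)$ from \eqref{inequality} followed by the chain of inequalities established in the proof of Proposition~\ref{adjedge} (the one producing $\tfrac1{\mu^2}\P(x\sim y\sim z)\leq \E[(\tfrac{\lambda W_y}{|x-y|^{\alpha}}\wedge 1)(\tfrac{\lambda W_y}{|y-z|^{\alpha}}\wedge 1)]\leq 4\,\P(x\sim y\sim z)$), one bounds it below by a fixed constant multiple of $\P(x\sim y\sim z)$. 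Proposition~\ref{adjedge} then yields the claimed lower bound $c_1|x-y|^{-\alpha}|y-z|^{-\alpha(\tau-2)}$, with $c_1$ in fact independent of $a,b$. For the upper bound I would use $1-e^{-t}\leq t\wedge 1$ and then $st\wedge 1\leq s(t\wedge 1)$ for $s\geq 1$, $t>0$ (applied with $s=a$ and $s=b$), to obtain
\[
\P\bigl(x\sim y\sim z\mid W_x=a,\,W_z=b\bigr)\;\leq\; ab\,\E\!\left[\left(\frac{\lambda W_y}{|x-y|^{\alpha}}\wedge 1\right)\!\left(\frac{\lambda W_y}{|y-z|^{\alpha}}\wedge 1\right)\right]\;\leq\; 4ab\,\P(x\sim y\sim z),
\]
so Proposition~\ref{adjedge} closes the bound with $c_2(a,b)=4ab\,c_2$.

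For the second assertion, assume $m|x-y|\leq |y-z|\leq M|x-y|$. Since $\alpha(\tau-2)>0$, comparability gives $M^{-\alpha(\tau-2)}|x-y|^{-\alpha(\tau-2)}\leq |y-z|^{-\alpha(\tau-2)}\leq m^{-\alpha(\tau-2)}|x-y|^{-\alpha(\tau-2)}$, hence the quantity from the first part satisfies
\[
|x-y|^{-\alpha}|y-z|^{-\alpha(\tau-2)}\;\asymp\;|x-y|^{-\alpha(\tau-1)}\;=\;|x-y|^{-\alpha(\tau-1)/2}\,|x-y|^{-\alpha(\tau-1)/2}\;\asymp\;|x-y|^{-\alpha(\tau-1)/2}\,|y-z|^{-\alpha(\tau-1)/2},
\]
using comparability again in the last step, all constants depending only on $m,M,\tau$; note that $\alpha(\tau-1)/2=\alpha_1$ in this regime since $\tau<3$. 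Combining with the first part gives the displayed two-sided estimate. The one point that needs a word of care: the first part was stated only for $|x-y|\geq|y-z|$, while here $|y-z|>|x-y|$ is possible when $M>1$; in that case one applies the first part to the reversed triple $(z,y,x)$ with conditioning $W_z=b$, $W_x=a$ — legitimate since $\P(x\sim y\sim z)=\P(z\sim y\sim x)$ — and one takes $x_0'=x_0'(a,b)$ large enough, depending also on $m$, that both $|x-y|\geq x_0$ and $|y-z|\geq x_0$ hold whenever $|x-y|\geq x_0'$.

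I do not expect a genuine obstacle here: the delicate term-by-term analysis (identifying which of the three contributions dominates as $|y-z|\to\infty$) has already been done inside Proposition~\ref{adjedge}, and what remains is only to get the two monotonicity directions right and to symmetrise in the two endpoint distances for the comparable case.
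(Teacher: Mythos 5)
Your argument is correct and is exactly the route the paper intends: the corollary is stated without proof precisely because the proof of Proposition~\ref{adjedge} already sandwiches $\P(x\sim y\sim z)$ between constant multiples of $\E\bigl[(\tfrac{\lambda W_y}{|x-y|^{\alpha}}\wedge 1)(\tfrac{\lambda W_y}{|y-z|^{\alpha}}\wedge 1)\bigr]$, which involves neither $W_x$ nor $W_z$, so conditioning on $W_x=a$, $W_z=b$ only changes the constants via the same monotonicity inequalities you use. Your attention to the reversal of the triple when $|y-z|>|x-y|$ in the comparable case is a detail the paper glosses over, and it is handled correctly.
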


In light of Propositions \ref{pro31} and \ref{adjedge} and Corollary \ref{cor314}, we now aim to construct a path with edges of comparable length. Instead of connecting two vertices directly, we use an intermediate vertex as a ``bridge'' to connect the two vertices.
For $x,y\in\Z^d$, $A\subset\Z^d$, we write 
\[ \{x\sim A \sim y\}=\bigcup_{z\in A}\{x\sim z\sim y\}\]
for the event that $x$ and $y$ are connected via an ``intermediate vertex'' in $A$.

\begin{proposition}\label{pro315}
For $\beta\in (0,1)$,  there exist constants $N_0,K>0$ such that for all $x,y\in \mathbb{Z}^d$ with  $N:=|x-y|\geq N_0$ it is true that
\begin{equation}
\P\left(x\sim A \sim y \right) \geq  \frac{K}{N^{2\alpha_1-d\beta}},
\end{equation}
where 
\[A:=\Big(\frac12\big(x+y\big)+\big[-N^\beta,N^\beta\big]^d\Big)\cap\Z^d\]  
is the cube with side length $N^{\beta}$ centred at the middle point of the line segment between $x$ and $y$.
\end{proposition}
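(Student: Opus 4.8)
The plan is a second-moment (Paley--Zygmund) argument. Throughout we are in the regime $\tau\in(2,3)$ relevant to this section, so $\alpha_1=\alpha(\tau-1)/2$ and hence $2\alpha_1=\alpha(\tau-1)=\gamma d>2d$. I would set $X:=\sum_{z\in A}\indic{x\sim z\sim y}$, so that $\{x\sim A\sim y\}=\{X\ge1\}$, and use the elementary bound $\P(X\ge1)\ge(\E X)^2/\E[X^2]$ (Cauchy--Schwarz applied to $X=X\indic{X\ge1}$, valid since $X\ge0$ is integer-valued). Because $\beta<1$, for $N$ large every $z\in A$ satisfies $N/4\le|x-z|,|z-y|\le N$, and $|A|\asymp N^{d\beta}$. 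It will therefore be enough to show $\E X\gtrsim N^{d\beta-2\alpha_1}$ and $\E[X^2]\lesssim N^{d\beta-2\alpha_1}$, which then yields $\P(x\sim A\sim y)\gtrsim N^{-(2\alpha_1-d\beta)}$.

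The first moment is immediate: by Proposition \ref{adjedge}, applied with $z$ in the role of the middle vertex, $\P(x\sim z\sim y)\asymp N^{-\alpha(\tau-1)}=N^{-2\alpha_1}$ uniformly over $z\in A$ once $N$ is large (all distances are then comparable to $N$), so $\E X\asymp|A|\,N^{-2\alpha_1}\asymp N^{d\beta-2\alpha_1}$. For the second moment I would write $\E[X^2]=\E X+\sum_{z\ne z'}\P(x\sim z\sim y,\ x\sim z'\sim y)$; the diagonal part is already controlled, so everything reduces to bounding the off-diagonal sum by $O(N^{d\beta-2\alpha_1})$.

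For fixed distinct $z,z'\in A$ (so $x,y,z,z'$ are distinct) I would bound each connection probability by $p_{uv}\le1\wedge\lambda W_uW_v|u-v|^{-\alpha}$ and then condition on $(W_x,W_y)$: given these, the two factors attached to $z$ depend only on $W_z$ and the two attached to $z'$ only on $W_{z'}$, hence are independent. Using $|x-z|,|z-y|,|x-z'|,|z'-y|\ge N/4$ this gives, with $W$ an independent copy of the weight,
\[\P(x\sim z\sim y,\ x\sim z'\sim y)\ \le\ \E\big[g(W_x,W_y)^2\big]=:G_N,\qquad g(a,b):=\E\Big[\big(1\wedge\tfrac{\lambda aW}{(N/4)^\alpha}\big)\big(1\wedge\tfrac{\lambda bW}{(N/4)^\alpha}\big)\Big],\]
which does not depend on $z,z'$, so the off-diagonal sum is $\le|A|^2G_N\lesssim N^{2d\beta}G_N$. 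To estimate $G_N$ I would use $(1\wedge u)(1\wedge v)\le1\wedge uv$ together with the elementary bound $\E[1\wedge sW^2]\le C_\tau(1\wedge s)^{(\tau-1)/2}$ (valid because $W^2$ has tail exponent $(\tau-1)/2<1$), giving $g(a,b)^2\le C_\tau^2\big(1\wedge\lambda^2ab(N/4)^{-2\alpha}\big)^{\tau-1}$; taking expectations and using the product tail $\P(W_xW_y>v)=v^{-(\tau-1)}(1+(\tau-1)\log v)$ (and the corresponding density), a direct computation gives $\E\big[(1\wedge tW_xW_y)^{\tau-1}\big]\asymp t^{\tau-1}(\log1/t)^2$ as $t\downarrow0$, so with $t\asymp N^{-2\alpha}$ one obtains $G_N\lesssim N^{-2\alpha(\tau-1)}(\log N)^2=N^{-4\alpha_1}(\log N)^2$.

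Combining, the off-diagonal sum is $\lesssim N^{2d\beta-4\alpha_1}(\log N)^2$. Since $d\beta<d<2d<2\alpha_1$, one has $2d\beta-4\alpha_1=(d\beta-2\alpha_1)-(2\alpha_1-d\beta)$ with $2\alpha_1-d\beta>d$, so this term is polynomially smaller than $N^{d\beta-2\alpha_1}$ and is absorbed, logarithms included; hence $\E[X^2]\lesssim N^{d\beta-2\alpha_1}$, and $\P(x\sim A\sim y)\ge(\E X)^2/\E[X^2]\gtrsim N^{d\beta-2\alpha_1}=N^{-(2\alpha_1-d\beta)}$, which is the claim after fixing $N_0$ and $K$. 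The main obstacle is exactly this off-diagonal second moment: the $2$-paths $x\!-\!z\!-\!y$ and $x\!-\!z'\!-\!y$ are positively correlated through the shared weights $W_x,W_y$ (cf.\ Propositions \ref{pro31} and \ref{adjedge}), so they cannot simply be factorised; conditioning on $(W_x,W_y)$ decouples them, but one must still verify that $\E[g(W_x,W_y)^2]$ stays of the ``independent'' order $N^{-4\alpha_1}$ up to logarithmic factors — and it is only the strict inequality $\beta<1$, leaving a polynomial gap of order $N^{2\alpha_1-d\beta}\ (\ge N^{d})$, that renders these logarithms and the $|A|^2$-versus-$|A|$ loss harmless.
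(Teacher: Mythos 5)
Your proposal is correct, but it takes a genuinely different route from the paper. The paper's proof exploits monotonicity of the connection probabilities in the endpoint weights: it lower-bounds $\P(x\sim A\sim y)$ by $\P(x\sim A\sim y\mid W_x=W_y=1)$, under which conditioning the events $\{x\sim z\sim y\}$, $z\in A$, become \emph{mutually independent} (they then involve only $W_z$ and the edge uniforms), so the probability equals $1-\prod_{z\in A}(1-\P(x\sim z\sim y\mid W_x=W_y=1))$; a single application of Corollary \ref{cor314} and the elementary bound \eqref{inequality2} finishes the argument with no second moment needed. You instead run Paley--Zygmund on $X=\sum_{z\in A}\indic{x\sim z\sim y}$, which forces you to confront exactly the positive correlation through the shared weights $W_x,W_y$ that the paper's conditioning trick sidesteps; your decoupling via conditioning on $(W_x,W_y)$, the bound $g(a,b)^2\le C\bigl(1\wedge \lambda^2 ab (N/4)^{-2\alpha}\bigr)^{\tau-1}$, and the product-tail computation giving $G_N\lesssim N^{-4\alpha_1}(\log N)^2$ are all sound for $\tau\in(2,3)$, and the resulting off-diagonal contribution $N^{2d\beta-4\alpha_1}(\log N)^2$ is indeed absorbed because $2\alpha_1-d\beta>d$. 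The trade-off: the paper's argument is shorter and needs only the conditional two-edge estimate, while yours is more robust --- it does not rely on the connection probability being monotone in the weights and would transfer to variants where conditioning on extremal weights is unavailable --- at the price of a more delicate moment computation (and the logarithmic corrections it generates, which you correctly argue are harmless here).
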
 

\begin{proof}
Since $\beta<1$, there exist constants $l=l(\beta,d)$ and $L=L(\beta,d)$ with $L>l>0$ and $N_1>0$ such that 
\begin{align*}
lN\leq|x-z|\leq LN\quad \text{ and}\quad 
lN\leq|y-z|\leq LN,
\end{align*}
for all $z\in A$ and all $N\geq N_1$. Therefore, $|x-z|$ and $|y-z|$ are comparable in the sense of Proposition \ref{cor314}. Thus we have
\begin{align}
\P\left(x\sim A \sim y \right)
\,&\geq \P\left(x\sim A \sim y \middle|W_x=1,W_y=1\right)\nonumber\\
&=1-\prod_{z\in A}\left(1-\P\left(x\sim z \sim y\middle|W_x=W_y=1\right)\right),
\end{align}
where we used the conditional independence of edges and the independence of vertex weights. 
We estimate this further using Corollary \ref{cor314} and get that there exists $N_2>0$, $c_1>0$, such that for all $N\geq N_2$,
\begin{equation}\label{conprob}
\P\left(x\sim z \sim y\middle|W_x=W_y=1\right)\geq c_1\frac{1}{|x-z|^{\alpha_1}}\frac{1}{|z-y|^{\alpha_1}}\geq \frac{c_1}{L^{2\alpha_1}}\frac{1}{N^{2\alpha_1}}.
\end{equation}
Note that the right hand side of \eqref{conprob} is independent of $z$, which allows us to estimate
\begin{align*}
\P\left(x\sim A \sim y \right)\geq 1-\left(1-\frac{c_1}{L^{2\alpha_1}N^{2\alpha_1}}\right)^{N^{d\beta}}.
\end{align*}
Now we use the elementary bound 
\begin{equation}\label{inequality2}
1-t\leq e^{-t} \leq 1-\frac{1}{2}t \qquad (0<t<1)
\end{equation}
to conclude that
\begin{equation}
\left(1-\frac{c_1}{L^{2\alpha_1}N^{2\alpha_1}}\right)^{N^{d\beta}}\leq e^{-CN^{d\beta-2\alpha_1}/L^{2\alpha_1}}\leq 1-\frac{c_1}{2N^{2\alpha_1-d\beta}L^{2\alpha_1}}.
\end{equation}
Since $d\beta-2\alpha_1<0$, there exists $N_3>0$ such that we have $c_1N^{d\beta-2\alpha_1}/L^{2\alpha_1}<1$ for all $N\geq N_3$, and consequently also $c_1N^{-2\alpha_1}L^{-2\alpha_1}<1$.
Finally, we have 
\begin{equation}
\P(x\sim A \sim y)\geq \frac{c_1}{2N^{2\alpha_1-d\beta}L^{2\alpha_1}}
\end{equation}
for all $N\geq  N_0:=\max\{N_1,N_2,N_3\}$ and choose $K:=\frac{c_1}{2L^{2\alpha_1}}$ as desired.
\end{proof}

With these preparations we finally prove the upper bound.
\begin{proof}[Proof of Theorem \ref{thm21}, upper bound]
Since the adjacent paths in scale-free percolation are positively correlated (by Proposition \ref{pro31}) and the probability of the compound edge "$x\sim A \sim y$" decays algebraically with exponent $2\alpha_1-d\beta$ (by Proposition \ref{pro315}), we have that the probability of a path being open in SFP dominates that in LRP with edge probability decaying with exponent $2\alpha_1-d\beta$ in \eqref{asymlrp}. 
Therefore, the graph distance in SFP in this case is no more than twice the distance in long-range percolation with connection probability as in \eqref{asymlrp} but with $\alpha$ replaced by $2\alpha_1-d\beta$. Since one can choose $\beta$ arbitrarily close to 1, the result follows from Theorem \ref{lrp}. 
\end{proof}

\begin{remark}\label{remark}
In this paper, we made a specific choice for the connection probability in \eqref{prob}. However, the  proofs for both lower and upper bounds in Section \ref{sec2} and Section \ref{sec3} only require  asymptotics of the connection probability to estimate the path, for example in Lemma \ref{singleedge} and Proposition \ref{adjedge}. Therefore, our results generalise to the scale-free percolation with connection probability $p_{x,y}=\Theta\left(\frac{\lambda W_xW_y}{|x-y|^{\alpha}}\wedge 1\right)$, provided that a unique infinite cluster exists.
\end{remark}

\color{black}

\paragraph{\bf Acknowledgement.} We acknowledge support from Deutsche Forschungsgemeinschaft (DFG) under project 386248531. We thank Matthew Dickson for advise on the presentation. 

\bibliographystyle{amsplain}


\begin{thebibliography}{10}

\bibitem{Ben04}
I.~Benjamini, H.~Kesten, Y.~Peres, and O.~Schramm, \emph{{Geometry of the
  uniform spanning forest: Transitions in dimensions 4, 8, 12,…}}, Ann. Math.
  \textbf{160} (2004), no.~2, 465--491.

\bibitem{Ber04}
N.~Berger, \emph{{A lower bound for the chemical distance in sparse long-range
  percolation models}}, arXiv:math/0409021 (2004).

\bibitem{Bis04}
M.~Biskup, \emph{{On the scaling of the chemical distance in long-range
  percolation models}}, Ann. Probab. \textbf{32} (2004), no.~4, 2938--2977.

\bibitem{Bis17}
M.~Biskup and J.~Lin, \emph{{Sharp asymptotic for the chemical distance in
  long‐range percolation}}, Random Struct. Algorithms \textbf{55} (2019),
  no.~3, 560--583.

\bibitem{Bri19}
K.~Bringmann, R.~Keusch, and J.~Lengler, \emph{{Geometric inhomogeneous random
  graphs}}, Theor. Comput. Sci. \textbf{760} (2019), 35--54.

\bibitem{Bri17}
K.~{Bringmann}, R.~{Keusch}, J.~{Lengler}, Y.~{Maus}, and A.~R. {Molla},
  \emph{{Greedy routing and the algorithmic small-world phenomenon}},
  Proceedings of the 36th ACM symposium on principles of distributed computing,
  PODC '17, Washington, DC, USA, July 25--27, 2017, New York, NY: Association
  for Computing Machinery (ACM), 2017, pp.~371--380.

\bibitem{Bri20}
K.~Bringmann, R.~Keusch, J.~Lengler, Y.~Maus, and A.~R. Molla, \emph{{Greedy
  Routing and the Algorithmic Small-World Phenomenom}}, J. Comput. System Sci.
  \textbf{125} (2022), 59--105.

\bibitem{Dal19}
J.~Dalmau and M.~Salvi, \emph{{Scale-free percolation in continuum space:
  quenched degree and clustering coefficient}}, J. Appl. Probab. \textbf{58}
  (2021), no.~1, 106--127.

\bibitem{Hof13}
M.~Deijfen, R.~van~der Hofstad, and G.~Hooghiemstra, \emph{{Scale-free
  percolation}}, Ann. Inst. H. Poincar{\'e} Probab. Statist. \textbf{49}
  (2013), no.~3, 817--838.

\bibitem{Dep15}
P.~Deprez, R.~S. Hazra, and M.~V. W{\"u}thrich, \emph{{Inhomogeneous Long-Range
  Percolation for Real-Life Network Modeling}}, Risks \textbf{3} (2015), no.~1,
  1--23.

\bibitem{Dep19}
P.~Deprez and M.V. W{\"u}thrich, \emph{{Scale-Free Percolation in Continuum
  Space}}, Commun. Math. Stat. \textbf{7} (2019), no.~3, 269--308.

\bibitem{Din13}
J.~Ding and A.~Sly, \emph{{Distances in critical long range percolation}},
  arXiv:1303.3995 (2018).

\bibitem{Gra19}
P.~Gracar, M.~Heydenreich, C.~M{\"o}nch, and P~M{\"o}rters, \emph{{Recurrence
  versus Transience for Weight-Dependent Random Connection Models}},
  arXiv:1911.04350 (2019).

\bibitem{Gra20}
P.~Gracar, L.~L{\"u}chtrath, and P.~M{\"o}rters, \emph{{Percolation phase
  transition in weight-dependent random connection models}}, Adv. Appl. Probab.
  \textbf{53} (2021), no.~4, 1090--1114.

\bibitem{Hey17}
M.~Heydenreich, T.~Hulshof, and J.~Jorritsma, \emph{{Structures in
  supercritical scale-free percolation}}, Ann. Appl. Probab. \textbf{27}
  (2017), no.~4, 2569--2604.

\bibitem{Hir20}
C.~Hirsch and C.~M{\"o}nch, \emph{{Distances and large deviations in the
  spatial preferential attachment model}}, Bernoulli \textbf{26} (2020), no.~2,
  927--947.

\bibitem{Hof19}
R.~van~der Hofstad, \emph{Random graphs and complex networks: Volume 2},
  preliminary version available under https://www.win.tue.nl/~rhofstad/.

\bibitem{Hof17}
R.~van~der Hofstad and J.~Kompathy, \emph{{Explosion and distances in
  scale-free percolation}}, arXiv:1706.02597 (2017).

\bibitem{Kle99}
J.~Kleinberg, \emph{The small-world phenomenon: An algorithmic perspective},
  Proceedings of the Thirty-Second Annual ACM Symposium on Theory of Computing,
  STOC '00, Association for Computing Machinery, 2000, pp.~163--170.

\bibitem{Mil67}
S.~Milgram, \emph{{The small world problem}}, Psychology Today \textbf{1}
  (1967), no.~1, 61--67.

\bibitem{New86}
C.~M. Newman and L.~S. Schulman, \emph{{One dimensional $1/|x-y|^s$ percolation
  models: the existence for a transition for $s\leq 2$}}, Commun. Math. Phys.
  \textbf{104} (1986), no.~4, 547--571.

\bibitem{Trp10}
P.~Trapman, \emph{{The growth of the infinite long-range percolation cluster}},
  Ann. Probab. \textbf{38} (2010), no.~4, 1583--1608.

\end{thebibliography}
\providecommand{\bysame}{\leavevmode\hbox to3em{\hrulefill}\thinspace}
\providecommand{\MR}{\relax\ifhmode\unskip\space\fi MR }
\providecommand{\MRhref}[2]{%
  \href{http://www.ams.org/mathscinet-getitem?mr=#1}{#2}
}
\providecommand{\href}[2]{#2}

\end{document}